\documentclass[reqno,11pt,a4paper]{amsart}
\usepackage{amssymb,amsmath,amsthm}
\usepackage{extarrows}
\usepackage{color}

\newcommand{\R}{\mathbb{R}}

\newtheorem{theorem}{Theorem}[section]
\newtheorem{lemma}[theorem]{Lemma}
\newtheorem{proposition}[theorem]{Proposition}
\newtheorem{corollary}[theorem]{Corollary}

\theoremstyle{definition}

\numberwithin{equation}{section}

\textheight 23cm \textwidth 16.9cm

\oddsidemargin 0pt \evensidemargin 0pt \topmargin -40pt

\baselineskip 8pt

\begin{document}

  \title{ Torsion energy with boundary mean zero condition  }

\author[Q. Li]{Qinfeng Li}
\address{Qinfeng Li, School of Mathematics, Hunan University, Changsha, Hunan, China}
\email{liqinfeng1989@gmail.com}

\author[W. Xie]{Weihong Xie}
\address{Weihong Xie, School of Mathematics and Statistics, HNP-LAMA, Central South University, Changsha, Hunan,   China}
\email{wh.xie@csu.edu.cn}

\author[H. Yang]{Hang Yang}
\address{Hang Yang, School of Mathematics, Hunan University, Changsha, Hunan, China}
\email{yanghang0925@163.com}

\begin{abstract}
Motivated by establishing Neumann Talenti type comparison results, we concern the minimization of the following shape functional under volume constraint:
  \begin{align*}
        T(\Omega):=\inf\left\{\frac12 \int_{\Omega} |\nabla u|^2\,dx -\int_\Omega  u\,dx: u\in H^1(\Omega),\ \int_{\partial \Omega}ud\sigma=0 \right\}.
  \end{align*}
We prove that ball is a local minimizer to $T(\cdot)$ under smooth perturbation, but quite surprisingly, ball is not locally minimal to $T(\cdot)$ under Lipschitz perturbation. In fact, let $P_N$ be the regular polygon in $\mathbb{R}^2$ with $N$ sides and area $\pi$, then we prove that $T(P_N)$ is a strictly increasing function with respect to $N$ and $\lim_{N\rightarrow \infty}T(P_N)=T(B)$ where $B$ is the unit disk.

As another side result, we prove that in dimension bigger than or equal to three, rigidity results of Serrin's seminal overdetermined system is not stable under Dirichlet perturbations, in contrast to the stability of rigidity under Neumann perturbation.
\end{abstract}
\date{}

\maketitle

\vskip3mm


 \vskip6mm

\section{Introduction and statement of  results}

\subsection{Background and motivating problems}
In this paper, we consider the following shape functional defined on bounded Lipschitz domains in $\R^n$:
\begin{equation}\label{T-beta}
T_\beta(\Omega):=\inf\left\{\frac12 \int_{\Omega} |\nabla u|^2\, dx+\frac\beta2\int_{\partial \Omega}u^2\, d\sigma-\int_\Omega fu\, dx: u\in H^1(\Omega),\ \int_{\partial \Omega}ud\sigma=0 \right\},
\end{equation}
where $\beta\ge 0$ is a constant parameter and $f>0$ is a radial weight function. 

It is readily checked that the infimum in \eqref{T-beta} is attained by $u_\Omega$ which satisfies
\begin{equation}\label{EL-Tb}
    \begin{cases}
     -\Delta u_\Omega=f \qquad &\text{in }\Omega;\\
     \frac{\partial u_\Omega}{\partial \nu}+\beta u_\Omega =c \qquad &\text{on }\partial\Omega;\\
     \int_{\partial \Omega}u_\Omega d\sigma=0.
  \end{cases}
\end{equation}
Here $\nu$ is the outer unit normal to $\partial\Omega$ and $c$ is a constant satisfying compatibility of \eqref{EL-Tb}. Hence $c=-\frac{\int_\Omega f\, dx}{P(\Omega)}$. When $\beta>0$, $\eqref{EL-Tb}_{2}$ automatically implies $\eqref{EL-Tb}_3$, while when $\beta=0$, they are independent. Both cases the system \eqref{EL-Tb} admits a unique solution.

As $\beta\rightarrow \infty$, the    functional $T_\beta$ is reduced to
\begin{equation}\label{T-infty}
  T_\infty(\Omega):=\inf\left\{\frac12 \int_{\Omega} |\nabla u|^2\, dx -\int_\Omega fu\, dx: u\in H^1_0(\Omega) \right\}.
\end{equation}
The corresponding minimizer $u$ solves
\begin{equation}\label{EL-D}
    \begin{cases}
     -\Delta u=f \qquad &\text{in }\Omega;\\
     u=0 \qquad &\text{on }\partial\Omega.
  \end{cases}
\end{equation}
The seminal Saint-Venant inequality states that when $f\equiv 1$, among all shapes with fixed volume, $T_\infty(\cdot)$ attains its minimum at round shape. Later, it is shown in \cite{Li-Yang} that the same results hold if $f$ is radially decreasing, which is essentially a consequence of a stronger result proved by Talenti in his celebrated paper \cite{Talenti}, where he established the pointwise comparison result:
 \begin{equation}\label{Talenti-comparison}
   u^\sharp(x)\le v(x) \quad \mbox{ for all } x\in\Omega^\sharp.
 \end{equation}
In the above, $\Omega^\sharp$  represents the ball of the same volume as $\Omega$ centered
at the origin, $u^\sharp$ is the decreasing Schwarz  rearrangement of $u$, and $v$ satisfies
\begin{equation*}
      \begin{cases}
     -\Delta v=f^\sharp \qquad &\text{in }\Omega^\sharp;\\
     v=0 \qquad &\text{on }\partial\Omega^\sharp.
  \end{cases}
\end{equation*}
The equality case is obtained in \cite{Alvino-Lions-Trombetti}.

Remarkably, Alvino,   Nitsch  and   Trombetti in \cite{Alvino-Nitsch-Trombetti} established Talenti type comparison results under Robin boundary condition. They study the following   Robin boundary  problems:
 \begin{equation}\label{EL-R}
    \begin{cases}
     -\Delta u=f \qquad &\text{in }\Omega;\\
      \frac{\partial u}{\partial \nu}+\beta u=0 \qquad &\text{on }\partial\Omega.
  \end{cases}
\end{equation}
 and
 \begin{equation*}
      \begin{cases}
     -\Delta v=f^\sharp \qquad &\text{in }\Omega^\sharp;\\
    \frac{\partial v}{\partial \nu}+\beta v=0 \qquad &\text{on }\partial\Omega^\sharp;
  \end{cases}
\end{equation*}
 and showed that  for any nonnegative function $f\in L^2(\Omega)$ and $n=2$,  the following weaker comparison result holds
\begin{equation}\label{L1-comparison}
  \|u\|_{L^1(\Omega)}\le \|v\|_{L^1(\Omega^\sharp)}.
\end{equation}
On the other hand, when $f\equiv1$, they obtained \eqref{Talenti-comparison} for $n=2$ and \eqref{L1-comparison} for $n\ge 3$, while leave the validity of \eqref{Talenti-comparison} for the case $n\ge 3$ open. Nevertheless, these results are very strong enough and can also give an alternative proof of Bossel-Danners inequality \cite{Bossel,Daners}, different from the free-discontinuity approach in \cite{BG15}. Whereafter,  Alvino et al.    in \cite{Alvino-Chiacchio-Nitsch-Trombetti}  extended the results of \cite{Alvino-Nitsch-Trombetti} by considering the more general boundary condition:
\begin{equation}\label{general bdy}
    \frac{\partial u}{\partial \nu}+\beta(x) u=0 \qquad  \text{on }\partial\Omega
\end{equation}
with $\beta(x)$ not being a constant, and assuming that
\begin{equation*}
  \int_E f \, dx\le\frac{|E|^{\frac{n-2}{n}}}{|\Omega|^{\frac{n-2}{n}}} \int_\Omega f \,dx,\quad\mbox{ for all measurable }E\subset \Omega.
\end{equation*}

 Naturally,  we would like to investigate whether or not there exists a possible version of Talenti type or Saint-Venant type inequality under constant Neumann boundary conditions. Note that all the previous $L^1$ comparison results equivalently says that fixing volume, ball shape is a maximizer to the functional  
 \begin{align}
     \label{Gfunctional}
G(\Omega):=\int_{\Omega}|\nabla u|^2 \, dx,
 \end{align}
 where $u$ either solves \eqref{EL-D} or \eqref{EL-R}. Now we seek Neumann Saint-Venant or Talenti inequality on the shape functional \eqref{Gfunctional}. The reason is that,  solutions to Possion equation with constant Neumann data are not unqiue, so it is not possible to state some $L^1$ comparison results. Nevertheless, the solutions are unique up to a constant, and thus  \eqref{Gfunctional} is the same no matter what different representatives of solutions we choose.

 In order for the convenience of computation, we choose the boundary mean zero condition as our consideration. That is, we consider the maximization of \eqref{Gfunctional}, inside which $u$ satisfies
 \begin{equation}\label{EL-N}
    \begin{cases}
     -\Delta u =1 \qquad &\text{in }\Omega,\\
     \frac{\partial u }{\partial \nu}  =c \qquad &\text{on }\partial\Omega,\\
     \int_{\partial \Omega}u  d\sigma=0,
  \end{cases}
\end{equation}
where $c$ is a constant satisfying compatibility condition, that is, $c=-|\Omega|/P(\Omega)$. We call solution to \eqref{EL-N} the \textit{Neumann torsion function with boundary vanishing mean}.

Now \eqref{EL-N} is exactly the system \eqref{EL-Tb} with $\beta=0$ and $f=1$. Hence maximizing \eqref{Gfunctional}, where $u$ satisfies \eqref{EL-N}, is equivalent to minimizing \eqref{T-beta} for $\beta=0$ and $f\equiv 1$, due to integration by parts. That is, our goal is to minimize \begin{equation}\label{T-function}
     T(\Omega):=\inf\left\{\frac12 \int_{\Omega} |\nabla u|^2dx -\int_\Omega  udx: u\in H^1(\Omega),\ \int_{\partial \Omega}ud\sigma=0 \right\},
  \end{equation}prescribing volume. 

Clearly for any $\Omega$, $T(\Omega)\le 0$. Preliminary computations show that 
\begin{equation*}
      T(\text{cube})<    T(\text{ball})<    T(\text{thin  rectangle})\rightarrow 0.
\end{equation*}
Hence, there is no maximizer to $T(\cdot)$, and a ball is not a minimizer to $ T(\cdot)$.

We remark that by standard scaling argument, minimizing $T(\cdot)$ is equivalent to minimizing 
\begin{align*}
    \kappa_p(\Omega):=\inf\left\{\frac{\int_{\Omega} |\nabla u|^2\, dx}{\left(\int_\Omega  u^p\, dx\right)^{2/p}}: u\in H^1(\Omega)\setminus \{0\},\ \int_{\partial \Omega}ud\sigma=0 \right\}
\end{align*}
for $p=1$. Hence even in the class of convex domains, no maximizer exists for $\kappa_1(\cdot)$ prescribing volume. Nevertheless, it is shown in \cite{Huang-Li-Li-Yao} that ball is the unique maximizer of $\kappa_2(\cdot)$. This makes the study of \eqref{T-function} and its more general form \eqref{T-beta} more interesting.  

\vskip 0.3cm

\subsection{Our results}

\subsubsection{Stability under smooth perturbation}

Even though ball is not a global minimizer to $T(\cdot)$ under volume constraints, we are still interested in whether it is a stationary shape or stable shape under smooth perturbation, from variational point of view. More precisely, we let $F(t,x)$ be the \textit{flow map} generated by a smooth vector field $\eta \in C^{\infty}_0(\mathbb{R}^n, \mathbb{R}^n)$, that is, 
	\begin{align*}
		\begin{cases}
			\frac{d}{dt}F(t,x)=\eta\circ F(t,x)\quad  &t \ne 0\\
			F(0,x)=x\quad & t=0.
		\end{cases}
	\end{align*}
	We also denote $F_t(x):=F(t,x)$, and hence $F_t$ is a local diffeomorphism when $|t|$ is small. We say that $F_t$ or $\eta$ preserves the volume of $\Omega$, if $|F_t(\Omega)|=|\Omega|$. When we say that $\Omega$ is a \textit{stationary} shape to $T(\cdot)$ under smooth perturbation, we mean $\frac{d}{dt}\Big|_{t=0} T(F_t(\Omega))=0$ for any smooth flow map $F_{t}(\cdot)$ preserving the volume of $\Omega$ along the flow. We say $\Omega$ is \textit{stable} to $T(\cdot)$ under smooth perturbation, if $\Omega$ is stationary to $T(\cdot)$ and that $\Omega$ satisfies $\frac{d^2}{dt^2}\Big|_{t=0} T(F_t(\Omega))\ge 0$ for any smooth volume preserving flow map. 

We first state a more general result.

\begin{theorem}\label{the:local}
Let $f > 0$ be a smooth radial function about the origin, and   $B_R\subset \R^n$ be a ball of radius $R$ centered at the origin. Then for any $\beta\ge0$,
the ball $B_R$ is a stationary shape
 of \eqref{T-beta} under smooth perturbation. Moreover,  the ball $B_R$  is also a stable shape of \eqref{T-beta} under smooth perturbation if and only if
 \begin{equation}\label{f-condition}
     \frac{n-1-\beta R}{n}\bar f_{B_R}\le f(R)\le \bar f_{B_R},
 \end{equation}
 where $\bar f_{B_R}=\frac1{|B_R|}\int_{B_R} fdx$ and $f(R)=f\big|_{\partial B_R}$.
\end{theorem}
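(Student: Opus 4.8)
The plan is to compute the first and second shape derivatives of $T_\beta$ at the ball $B_R$ along a volume-preserving flow, using shape-differentiation of the state equation \eqref{EL-Tb}. First I would recall that $T_\beta(\Omega)=\frac12\int_\Omega |\nabla u_\Omega|^2 + \frac\beta2\int_{\partial\Omega}u_\Omega^2 - \int_\Omega f u_\Omega$, and since $u_\Omega$ is the minimizer, the term-by-term derivative with respect to the state vanishes, so only the domain variation contributes. Writing $u=u_{B_R}$ for the state on the ball, $u$ is radial and the boundary constant is $c=-\frac{\int_{B_R}f}{P(B_R)}=-\frac{R}{n}\bar f_{B_R}$ when $\beta=0$ (and more generally the value forced by compatibility). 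The key preliminary computation is the Hadamard-type formula
\begin{equation*}
\frac{d}{dt}\Big|_{t=0} T_\beta(F_t(\Omega)) = \int_{\partial\Omega}\left(\frac12|\nabla u|^2 + \frac\beta2\,\partial_\tau(u^2)\text{-terms} + \beta H u^2/2 - f u\right)(\eta\cdot\nu)\,d\sigma,
\end{equation*}
which, once all the curvature and tangential-derivative terms are assembled, reduces on the ball to a constant times $\int_{\partial B_R}(\eta\cdot\nu)\,d\sigma$; this integral is zero for volume-preserving $\eta$, giving stationarity. I expect the cleanest route is to use that $u$, $|\nabla u|^2$, and $\partial u/\partial\nu$ are all constant on $\partial B_R$ by radiality, so the first-variation integrand is constant on the sphere and the volume constraint kills it directly.

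For the second variation I would differentiate once more. Let $u' = \dot u$ denote the shape derivative of the state, which solves the linearized problem: $-\Delta u' = 0$ in $B_R$, with a Neumann/Robin condition on $\partial B_R$ whose data involves $(\eta\cdot\nu)$, the curvature $H=\frac{n-1}{R}$, the normal derivative $\partial_\nu u$, and $f|_{\partial B_R}$, together with the linearized mean-zero constraint $\int_{\partial B_R}(u' + u\,\mathrm{div}_\tau(\ldots))\,d\sigma = 0$. The second shape derivative at a stationary point is a quadratic form $Q(\eta\cdot\nu)$ that, after integration by parts, can be written as $\int_{B_R}|\nabla u'|^2 + \frac\beta2\int_{\partial B_R}(u')^2 + (\text{boundary quadratic terms in }\varphi:=\eta\cdot\nu)$. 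The standard move is to expand $\varphi$ in spherical harmonics $\varphi=\sum_{k\ge 1}\varphi_k$ on $\partial B_R$ — the $k=0$ mode is excluded by the volume constraint — solve the linearized problem mode by mode (so $u'$ is also decomposed into solid harmonics), and thereby diagonalize $Q$ into $\sum_{k\ge 1} q_k\|\varphi_k\|^2$. Positivity of $Q$ on the volume-preserving class is then equivalent to $q_k\ge 0$ for all $k\ge1$; the borderline mode (typically $k=1$ or $k=2$, corresponding to translations/ellipsoidal deformations) is what produces the sharp inequality \eqref{f-condition}.

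The main obstacle I anticipate is the bookkeeping in the second variation: correctly tracking all the curvature terms, the contribution of the varying boundary-mean-zero constraint (which shifts the Lagrange constant $c$ and hence couples modes in a subtle way), and the Robin term when $\beta>0$. In particular the constraint $\int_{\partial\Omega}u\,d\sigma=0$ is itself a functional of the domain, so its linearization and second linearization enter $Q$; one must be careful whether to treat $T_\beta$ as an unconstrained functional of $u$ with the constraint built into the admissible class, or to introduce a multiplier. I would handle this by choosing, for each fixed $\varphi$, a convenient extension of $\varphi\nu$ as the vector field (e.g. a gradient field or a field normal to $\partial B_R$ to leading order), which simplifies the transport terms, and then verify the answer is independent of the extension — a consequence of stationarity. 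Once $q_k$ is computed, matching $q_k\ge0$ for the critical mode against \eqref{f-condition} is a direct algebra check involving the explicit radial solution $u(r)$ and the values $\bar f_{B_R}$, $f(R)$; the factor $\frac{n-1-\beta R}{n}$ strongly suggests the lower bound comes from the $\beta$-dependent Robin contribution competing with the curvature term $\frac{n-1}{R}$ at the first nontrivial mode.
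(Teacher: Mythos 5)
Your overall framework is the same as the paper's: Hadamard first variation (the integrand is constant on $\partial B_R$ by radiality, so volume preservation gives stationarity), then a second variation expressed as a quadratic form in $\zeta=\eta\cdot\nu$ through the linearized state $v=u'$, analyzed spectrally on the sphere. Your endgame differs in presentation: you propose a full spherical-harmonic diagonalization, whereas the paper keeps things implicit, using the second Steklov eigenvalue $1/R$ of $B_R$ (valid because $\int_{\partial B_R}v\,d\sigma=0$) plus Cauchy--Schwarz for sufficiency, and translations for necessity. Your route does work and is in fact equivalent: writing $\zeta=\sum_{k\ge1}\zeta_k$, the linearized Robin condition gives $\partial_\nu v+\beta v=F(R)\zeta$ with $F(R)=-c\beta-u_{rr}(R)$, hence the harmonic extension responds mode by mode with factor $\frac{R}{k+\beta R}$, and $q_k=-F(R)\bigl(\frac{F(R)R}{k+\beta R}+u_r(R)\bigr)$; nonnegativity for all $k\ge1$ is exactly \eqref{f-condition}. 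Note, however, that your heuristic about where the two inequalities come from is partly misattributed: the lower bound in \eqref{f-condition} is simply $F(R)\ge0$ and is mode-independent (if $F(R)<0$ every mode is unstable), while the upper bound $f(R)\le\bar f_{B_R}$ comes from the single critical mode $k=1$ (translations, which are not a symmetry here because $f$ is pinned radial about the origin) --- this is precisely the paper's necessity argument, and the fact that $k=1$ saturates the Steklov inequality.

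The genuine gap is that the heart of the proof --- the second-variation formula itself, the analogue of \eqref{second-var}, together with the linearized problem \eqref{v-eq} --- is never derived; you only promise the bookkeeping. This is where the real work lies: one must differentiate the boundary-mean-zero constraint along the flow (the paper's identity \eqref{u'}), which is how the Lagrange constant $c$ enters and how the terms $c\int_{\partial\Omega_t}u'$ are eliminated; one must use the volume-preservation identities $\int_{\Omega_t}\mathrm{div}\,\eta\,dx=\int_{\Omega_t}\mathrm{div}((\mathrm{div}\,\eta)\eta)\,dx=0$ to kill the $\zeta\,\mathrm{div}\,\eta$ terms and the $c^2$ term; and one must track the evolution of $d\sigma_t$, $\nu(t)$ and $H(t)$ (the paper's Proposition 2.1) to get the curvature contributions right. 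Your proposed remedy for the constraint issue (``choose a convenient extension and verify independence'') is not a substitute for this computation, and without it you cannot identify the quadratic form $Q$, the coefficients $q_k$, or even the quantity $F(R)$ whose sign encodes the lower bound. So the plan is viable and its spectral step is sound, but as written it is an outline: to be a proof it must supply the derivation of \eqref{second-var} (or an equivalent formula), verify the linearized boundary condition for $v$, and then carry out the mode computation and the realization of the critical mode by an admissible volume-preserving flow.
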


Clearly, \eqref{f-condition} contains the case of $\beta\ge 0$ and $f \equiv 1$, and thus the following corollary is immediate.
\begin{corollary}
\label{localminsmooth}
    Under smooth volume-preserving perturbations, ball is a local minimizer to $T(\cdot)$ given by \eqref{T-function}.
\end{corollary}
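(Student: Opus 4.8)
The plan is to establish Theorem~\ref{the:local} by computing the first and second shape derivatives of $T_\beta$ at the ball $B_R$; Corollary~\ref{localminsmooth} is then the special case $f\equiv 1$. The computation is made manageable by first testing \eqref{EL-Tb} against $u_\Omega$ and integrating by parts, which together with $\int_{\partial\Omega}u_\Omega\,d\sigma=0$ yields the reduced identity $T_\beta(\Omega)=-\frac12\int_\Omega f u_\Omega\,dx$. On $B_R$ the state $u_{B_R}$ is radial; in particular $u_{B_R}\equiv 0$ on $\partial B_R$ (the mean-zero condition applied to a radial function) while $\partial_\nu u_{B_R}=-\frac Rn\bar f_{B_R}=c$ there. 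For the first variation I would pull \eqref{T-beta} back to the fixed domain $B_R$: writing $v_t$ for the transported minimizer and using that it is a critical point of the transported functional subject to the transported constraint, the derivative of the minimum value reduces to the explicit domain derivative of the functional together with a multiplier term, both of which are boundary integrals over $\partial B_R$ of quantities built from $u_{B_R}$, $\partial_\nu u_{B_R}$, $f$ and the mean curvature. Since $u_{B_R}$ vanishes on $\partial B_R$ and all the remaining quantities are constant there, the first derivative is a constant multiple of $\int_{\partial B_R}(\eta\cdot\nu)\,d\sigma=\frac{d}{dt}\big|_{t=0}|F_t(B_R)|$, and hence vanishes along volume-preserving flows; this gives stationarity for all $\beta\ge0$ and all radial $f>0$.

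For the stability part I would invoke the structure theorem for second-order shape derivatives at a stationary shape, which guarantees that $\frac{d^2}{dt^2}\big|_{t=0}T_\beta(F_t(B_R))$ depends on $\eta$ only through its normal trace $\varphi:=\eta\cdot\nu$ on $\partial B_R$ and equals a quadratic form $Q(\varphi)$ on $\{\varphi\in L^2(\partial B_R):\int_{\partial B_R}\varphi\,d\sigma=0\}$ (the linearized volume constraint). Because both $B_R$ and $f$ are rotationally invariant, $Q$ is diagonal in the spherical-harmonic decomposition $\varphi=\sum_{k\ge 1}\varphi_k$ on $\partial B_R$ (the mode $k=0$ is excluded by the constraint), so $Q(\varphi)=\sum_{k\ge 1}\mu_k\|\varphi_k\|_{L^2(\partial B_R)}^2$ with $\mu_k=\mu_k(n,R,\beta,f)$ depending on $k$ only through the eigenvalue $k(k+n-2)$ of the Laplace--Beltrami operator on $\mathbb{S}^{n-1}$. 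Hence $B_R$ is stable if and only if $\mu_k\ge 0$ for all $k\ge 1$, and conversely if some $\mu_k<0$ then $\varphi_k$ (automatically mean-zero) is destabilizing.

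It remains to compute the $\mu_k$. Assembling $Q(\varphi)$ calls for the second material derivatives of the Dirichlet and $\beta$-boundary terms, the mean-curvature terms produced by differentiating surface and domain integrals twice, the first- and second-order variations of the multiplier $c(\Omega)=-\int_\Omega f\,dx/P(\Omega)$, and the contribution of the volume constraint — with many terms dropping because $u_{B_R}|_{\partial B_R}=0$. One also needs the shape derivative $u'$ of the state in the direction $\varphi_k$: it is harmonic in $B_R$, hence of the form $a_k r^k Y_k$, with $a_k$ fixed by the linearized boundary condition coming from $\partial_\nu u_\Omega+\beta u_\Omega=c(\Omega)$. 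Substituting back, and using that $\int_{\mathbb{S}^{n-1}}Y_k=0$ for $k\ge 1$ (so the interior integrals of $f$ against $u'$ drop out), one finds that $f$ enters $\mu_k$ only through the boundary value $f(R)$ and the average $\bar f_{B_R}$, so that $\mu_k\ge 0$ is an explicit condition relating $f(R)$ and $\bar f_{B_R}$ with coefficients depending on $k(k+n-2)$. The last step is to check that the infinite family $\mu_k\ge 0$, $k\ge 1$, is equivalent to two of its members: the $k=1$ case, which is exactly $f(R)\le\bar f_{B_R}$ — this is the translation mode, which is why the bound is attained with equality when $f\equiv 1$ and $T(\cdot)$ is translation invariant — and one further case, which is exactly $\frac{n-1-\beta R}{n}\bar f_{B_R}\le f(R)$; together these give \eqref{f-condition}. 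Finally, for $f\equiv 1$ we have $\bar f_{B_R}=f(R)=1$ and $\frac{n-1-\beta R}{n}<1$ since $\beta R\ge 0$, so \eqref{f-condition} holds for every $\beta\ge 0$, which gives Corollary~\ref{localminsmooth}.

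The step I expect to be the main obstacle is the second-variation bookkeeping: correctly collecting every contribution to $Q(\varphi)$ — especially the first- and second-order variations of the multiplier $c(\Omega)$ and their interaction with the volume constraint — and then, after solving the elementary mode-by-mode boundary value problems for $u'$, simplifying the coefficients $\mu_k$ enough to extract the sharp two-sided bound \eqref{f-condition} and to pin down which mode produces the lower bound. A secondary technical point is justifying the reduction to normal perturbations and the diagonalization when $\beta=0$, where the underlying Neumann operator is invertible only on the boundary-mean-zero subspace.
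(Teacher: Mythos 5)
Your plan is sound and reaches the corollary, but the stability step is carried out differently from the paper. The first-variation part coincides with the paper's argument: both reduce to a boundary integral in which, on $\partial B_R$, $u$ vanishes by the mean-zero condition and $|\nabla u|=|c|$ is constant, so the shape gradient is a constant multiple of $\int_{\partial B_R}\eta\cdot\nu\,d\sigma$ and stationarity follows. For the second variation, however, the paper does not diagonalize: it derives the closed formula \eqref{second-var}, observes that $v$ has zero boundary mean, and then uses only the second Steklov eigenvalue of $B_R$ (equal to $1/R$) together with Cauchy--Schwarz to get the lower bound \eqref{Fv-lowbd}--\eqref{Fv-upbd}, which yields nonnegativity under \eqref{f-condition} in two lines; necessity is obtained by testing with translations. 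Your spherical-harmonics route is a finer version of the same mechanism (the Steklov bound is exactly the $k\ge1$ spectral inequality): solving \eqref{v-eq} mode by mode gives, up to normalization, $\mu_k=\frac{R}{k+\beta R}\Bigl(f(R)-\frac{n-1-\beta R}{n}\bar f_{B_R}\Bigr)\Bigl(\frac{k+n-1}{n}\bar f_{B_R}-f(R)\Bigr)$, and for $f\equiv1$ this is $\frac{R}{k+\beta R}\cdot\frac{1+\beta R}{n}\cdot\frac{k-1}{n}\ge0$, which proves Corollary~\ref{localminsmooth} (with the translation mode $k=1$ giving equality, as it must). What your route buys is full mode-resolved information, which in principle gives a cleaner necessity argument than the paper's translation test; what the paper's route buys is brevity, since one eigenvalue inequality replaces the entire diagonalization and the bookkeeping you correctly identify as the main burden.

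One caveat on your formulation, immaterial for the corollary but relevant if you intend to recover the full equivalence in Theorem~\ref{the:local}: each individual condition $\mu_k\ge0$ is a \emph{product} condition, so the $k=1$ inequality is not literally $f(R)\le\bar f_{B_R}$, and no single further mode is literally $\frac{n-1-\beta R}{n}\bar f_{B_R}\le f(R)$; indeed $\mu_1\ge0$ and $\mu_2\ge0$ can both hold with both factors negative. The two-sided condition \eqref{f-condition} is forced only by using all modes (the factor $\frac{k+n-1}{n}\bar f_{B_R}-f(R)$ is eventually positive as $k\to\infty$, which pins down the sign of $f(R)-\frac{n-1-\beta R}{n}\bar f_{B_R}$, and then $k=1$ gives the upper bound). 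Since for $f\equiv1$ you only need $\mu_k\ge0$ for every $k$, which holds as computed above, the corollary itself is unaffected.
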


The proof of Theorem \ref{the:local} relies on evolution equations of some geometric quantities along flow, and also takes advantage of Steklov eigenvalue problem, which has been used to study several shape optimizations recently, such as in \cite{Huang-Li-Li} and \cite{Li-Yang}. In \cite{Li-Yang}, the minimization of the following functional is considered:
\begin{equation}\label{J-beta}
J_\beta(\Omega):=\inf\left\{\frac12 \int_{\Omega} |\nabla u|^2dx+\frac\beta2\int_{\partial \Omega}u^2d\sigma-\int_\Omega fudx: u\in H^1(\Omega) \right\}  \quad \mbox{ for } \beta>0.
\end{equation}
In fact, there is a closed connection between $T_\beta(\Omega)$ and $J_\beta(\Omega)$, from which an alternative proof of Corollary \ref{localminsmooth} is obtained. This will be discussed in section 4.

We mention that from the proof of Theorem \ref{the:local}, ball shape is strict local minimal under smooth volume-preserving perturbation, if the perturbation is not a translation.

\vskip 0.3cm

\subsubsection{Instability under Lipschitz perturbation: monotonicity on regular polygon }
As mentioned, we can show that under smooth volume preserving deformations, ball is a local minimizer to $T(\cdot)$. However, rather strikingly, we can also prove that ball is \textbf{not} a local minimizer to $T(\cdot)$ under Lipschitz volume-preserving perturbation, let alone global minimality. Such phenomenon on shape functionals might not have been realized before. 

Our choice of perturbation path is the regular polygons with the same area $\pi$. We have the following result:

\begin{theorem}
    \label{PN}   
Let $P_N$ be the polygon in $\mathbb{R}^2$ with $N$ sides and area $\pi$, and let $B$ be the unit disk. Then for $N$ sufficiently large, $T(P_N)<T(B)$. In fact, $T(P_N)$ is a strictly increasing function with respect to $N$ and $\lim_{N\rightarrow \infty}T(P_N)=T(B)$.
\end{theorem}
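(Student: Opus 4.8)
\textbf{Proof proposal for Theorem \ref{PN}.}

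The plan is to obtain an explicit (or explicit-enough) formula for $T(P_N)$ by exploiting the scaling characterization $T(\Omega) = -\frac12 \kappa_1(\Omega)^{-1}\cdot(\dots)$; more directly, since the minimizer $u_{P_N}$ of \eqref{T-function} solves \eqref{EL-N}, integration by parts gives $T(P_N) = -\frac12\int_{P_N} u_{P_N}\,dx$, so it suffices to understand $\int_{P_N} u_{P_N}$, where $u_{P_N}$ is the Neumann torsion function with boundary vanishing mean on $P_N$. The first step is to reduce this to the ordinary Neumann torsion function $w_N$ solving $-\Delta w_N = 1$ in $P_N$, $\partial_\nu w_N = -|P_N|/P(P_N)$ on $\partial P_N$: any two solutions differ by a constant, and the boundary-mean-zero normalization fixes that constant as $u_{P_N} = w_N - \fint_{\partial P_N} w_N\,d\sigma$. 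Hence $\int_{P_N} u_{P_N} = \int_{P_N} w_N - |P_N|\,\fint_{\partial P_N} w_N\,d\sigma$, and by symmetry of the regular polygon both quantities can be computed as boundary/interior integrals of a single function.

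The second step is to compute these two quantities for the regular $N$-gon of area $\pi$. Here I would use the dihedral symmetry: $w_N$ (chosen to have zero average, say) is invariant under the symmetry group, so $\int_{P_N} w_N$ and $\fint_{\partial P_N} w_N$ reduce to integrals over a single fundamental triangle (one ``pie slice''). A natural route is to expand $w_N$ in the eigenfunctions of the Neumann Laplacian on $P_N$, or alternatively to write $w_N = -\frac{|x|^2}{2n} + h_N$ with $h_N$ harmonic and compute via the known conformal map from the disk to the regular polygon (Schwarz--Christoffel). For the asymptotics $N\to\infty$ one expects $P_N \to B$ in a strong enough sense (Hausdorff plus convergence of perimeters, and the Schwarz--Christoffel map converges to the identity), so a continuity/stability argument for the shape functional $T(\cdot)$ under this specific degeneration yields $\lim_{N\to\infty} T(P_N) = T(B)$; the main subtlety is that Theorem \ref{the:local} shows $T$ increases under smooth perturbation away from the ball but $P_N$ is only Lipschitz, so the limit must be established directly rather than inferred from local minimality.

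The third and most delicate step is the strict monotonicity in $N$. I would look for a monotone quantity: passing from $P_N$ to $P_{N+1}$ (both normalized to area $\pi$) is not a smooth perturbation, so I expect the argument to go through an explicit formula $T(P_N) = -\frac{\pi}{?}\,g(N)$ where $g$ is built from trigonometric expressions like $\tan(\pi/N)$, $\cot(\pi/N)$, and then to prove $g$ is monotone by calculus on the real variable $N$, or via a convexity/rearrangement inequality comparing consecutive polygons. Alternatively, one can try a direct test-function comparison: take the minimizer on $P_{N+1}$, transplant it to $P_N$ via an area-preserving bi-Lipschitz map that is ``closer to a rotation'' and estimate the Rayleigh-type quotient, but controlling the boundary-mean-zero constraint under such a map is awkward.

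\textbf{Main obstacle.} The hard part will be getting a sufficiently explicit handle on the Neumann torsion function on a polygon — in dimension two this is feasible via Schwarz--Christoffel, but extracting from it a clean closed form for $\int_{P_N} u_{P_N}$ that is manifestly monotone in $N$ is the crux; equally, justifying the $N\to\infty$ limit requires a stability estimate for $T(\cdot)$ along the Lipschitz family $P_N\to B$ (e.g.\ via uniform $H^1$ bounds on $u_{P_N}$, compactness, and lower semicontinuity together with a recovery-sequence upper bound), which does not follow from the smooth-perturbation analysis of Theorem \ref{the:local}.
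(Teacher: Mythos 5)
Your reduction $T(P_N)=-\tfrac12\int_{P_N}u_{P_N}\,dx$ and the observation that $u_{P_N}$ is the standard Neumann torsion function shifted by a constant fixed by the boundary-mean-zero condition are both correct and coincide with the paper's starting point. But the decisive idea is missing, and it is precisely the step you flag as the ``main obstacle.'' A regular polygon is a tangential polygon: on each side $x\cdot\nu\equiv\rho$, the inradius, and $\rho\,P(\Omega)=2|\Omega|$. Hence the compatibility constant is $c=-|\Omega|/P(\Omega)=-\rho/2$, which is exactly $\partial_\nu\bigl(-\tfrac14|x|^2\bigr)$ on every side. So the harmonic correction $h_N$ in your decomposition $w_N=-\tfrac14|x|^2+h_N$ is simply a constant, and the minimizer has the closed form \eqref{formulaforu}, $u(x)=\frac{1}{4P(\Omega)}\int_{\partial\Omega}|x|^2\,ds-\frac14|x|^2$. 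No Schwarz--Christoffel map, eigenfunction expansion, or fundamental-domain symmetry argument is needed; proposing those leaves the actual computation unexecuted, so as it stands your plan does not constitute a proof.

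Once the explicit formula is in hand, the rest is elementary: integrating $\Delta|x|^4$ over $\Omega$ and using $x\cdot\nu=\rho$ gives $\int_\Omega|x|^2\,dx=\tfrac{\rho}{4}\int_{\partial\Omega}|x|^2\,ds$, whence $E(P_N):=\int_{P_N}u\,dx=\tfrac{\rho}{16}\int_{\partial P_N}|x|^2\,ds$, and a one-line side-by-side computation with $L=2\rho\tan(\pi/N)$ and $N\rho^2\tan(\pi/N)=\pi$ yields $E(P_N)=\frac{\pi^2\bigl(3+\tan^2(\pi/N)\bigr)}{24\,N\tan(\pi/N)}$, which is strictly decreasing in $N$ and tends to $\pi/8=E(B)$. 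In particular your second worry --- that the limit $T(P_N)\to T(B)$ requires a separate stability/compactness argument (uniform $H^1$ bounds, lower semicontinuity, recovery sequences) because $P_N\to B$ is only a Lipschitz degeneration --- evaporates: the limit is read off directly from the explicit formula. Your instinct that monotonicity should come from calculus on a trigonometric expression in $N$ is right, but without the tangential-polygon observation the route you outline (conformal maps plus a stability theory for $T(\cdot)$) is substantially harder than the problem requires and is not carried out.
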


This theorem will be proved in section 3. The main idea is motivated from Keady-McNabb \cite{KM}. Unlike classical torsion function on regular polygons having no elementary expressions, the Neumann torsion function with vanishing boundary mean on regular polygons, has a simple geometric expression, from which we obtained the monotonicity with respect to the number of sides of regular polygons.

\vskip 0.2cm

\textbf{Remark}:   Only after the work was almost completed, we realized that evaluating $T(\cdot)$ at regular polygons and even tangential polygons have also been studied by Professor Keady in \cite{Keady}, where the readers can find several very nice formulas. 

\vskip 0.3cm

\subsubsection{Instability of Serrin's over-determined System under Dirichlet perturbation}

On the other hand, the system \eqref{EL-N}, as the Euler-Lagrange equation of the extremum function in \eqref{T-function}, is reminiscent of the Serrin's overdetermined system:
 \begin{equation}\label{serrin}
    \begin{cases}
     -\Delta u =1 \qquad &\text{in }\Omega,\\
     \frac{\partial u }{\partial \nu}  =c \qquad &\text{on }\partial\Omega,\\
     u   =0 \qquad &\mbox{on $\partial \Omega$}.
  \end{cases}
\end{equation}
Indeed, solution to $\eqref{serrin}_1$ and $\eqref{serrin}_3$ is the classica Dirichlet torsion function, and the extra condition $\eqref{serrin}_2$ is the due to the stationarity of domain under variation, if $\Omega$ is a stationary shape to $T_\infty(\cdot)$. Coincidentally, $\eqref{serrin}_2$ is incorporated into \eqref{EL-N}. This simple fact leads us to obtain some unexpected observation.

Serrin proved in his seminal paper \cite{Serrin} that the overdetermined system \eqref{serrin} with $\Omega\in \mathcal{C}^2$ admits a solution if and only if $\Omega$ is a ball, and alternative proofs can be found in \cite{BNST} and \cite{Wein}. On the stability of \eqref{serrin}, under the assumption that $\partial \Omega$ is $C^2$, Magnanini and  Poggesi substantiate in \cite{MP-1,MP-2,MP-3}  that if $ \frac{\partial u }{\partial \nu}$ is close to a constant, then $\Omega$ is close to a ball in some appropriate sense.    The following question is natural:

{\bf  Question 1:} If there is a solution  $u$ such that
  \begin{equation}\label{EL-epsilon}
    \begin{cases}
     -\Delta u =1 \qquad &\text{in }\Omega,\\
     \frac{\partial u }{\partial \nu}  =c \qquad &\text{on }\partial\Omega,\\
       \big| \underset{\partial\Omega}{\text{osc}} u \big| <\varepsilon,
  \end{cases}
\end{equation}
  then whether $\Omega$ is close to a ball as $\varepsilon\rightarrow 0$?

Surprisingly, the answer is negative for $n\ge 3$. It is through studying \eqref{T-function} and the associated equation \eqref{EL-N}, we obtain a  counterexample to stability of  Serrin type overdetermined system \eqref{EL-N}. The counterexample will be given in section 5. Whether or not the answer to Question \eqref{EL-epsilon} is positive in dimension $2$ is open to us.

\vskip 0.3cm

\textbf{Remark}:  Even though we have shown that ball is a stationary shape to $T(\cdot)$ given by \eqref{T-function}, we have not been able to classify all stationary shapes to \eqref{T-function}. That is, the rigidity of \eqref{EL-N} is not known to us. Nevertheless, we can show that any annulus cannot be stationary, see also in section 5. There we will also show that prescribing volume, $T(\cdot)$ is always bigger on annulus than on balls.

\section{Proof of Theorem \ref{the:local}}

We first give some notations. Let $F_t(x):=F(t,x)$ be the flow map generated by a smooth
vector field $\eta$ preserving volume. Then we denote $\Omega_t=F_t(\Omega)$, and let $u(t)$ be the unique function on $\Omega_t$ such that
 \begin{equation*}
    T_\beta(\Omega_t)= \frac12 \int_{\Omega} |\nabla u|^2dx+\frac\beta2\int_{\partial \Omega}u^2d\sigma-\int_\Omega fudx.
 \end{equation*}

Noting that   $F_t$ preserves the volume,   one has
\begin{equation}\label{zeta}
   \int_{\Omega_t}\text{div} \eta \, dx=0,\quad     \int_{\Omega_t}\text{div}( (\text{div} \eta ) \eta)\, dx=0,
\end{equation}
since the following formula holds:
\begin{equation*}
  \frac{d}{dt}\int_{\Omega_t}g(x,t)\, dx=\int_{\Omega_t}g_t(x,t)\, dx+\int_{\partial \Omega_t}g(x,t) \eta\cdot \nu(t) \, d\sigma_t.
\end{equation*}
By  the divergence theorem and equations \eqref{EL-Tb}, we have
\begin{equation}\label{c}
c= \frac1{P(\Omega)}\int_{\partial \Omega}\frac{\partial u_\Omega}{\partial \nu}\, d\sigma=  \frac1{P(\Omega)} \int_{\Omega}\Delta  u_\Omega \, dx=-\frac{\int_{\Omega} f\, dx}{P(\Omega)}=-\frac{\bar{f} |\Omega|}{P(\Omega)},
\end{equation}
where $\bar{f}=\frac1{|\Omega|}\int_{\Omega} fdx$.
We next state some evolution equations of some geometric quantities, which have been proven in  \cite[Proposition 2.1]{Huang-Li-Li}.
\begin{proposition}\label{pro:formula}
 Let $F_t(x):=F(t,x)$ be the flow map generated by a smooth vector field $\eta$,   $M_t=F_t(M)$,
$\sigma_t$  be the volume element of $M_t$, $\nu(t)$ be the unit normal field along $M_t$ and $h(t)$ be the second fundamental form of $M_t$, then we have
\begin{equation}\label{sigma-dt}
   \frac{d}{dt}d\sigma_t=(\text{div}_{M_t}\eta)d\sigma_t,
\end{equation}
\begin{equation}\label{zeta-dt}
    \frac{d}{dt}(\eta(F_t)\cdot \nu(t))=(\eta(F_t)\cdot \nu(t))(\text{div}\eta-\text{div}_{M_t}\eta)\circ F_t,
\end{equation}
and
\begin{equation}
   h_{ij}'(t)=-\langle\nabla_i\nabla_j \eta,\nu(t)\rangle,
\end{equation}
where $h_{ij}(t)$ and  $\nabla_i\nabla_j \eta$ are the $i,j$-components of $h(t)$ and the Hessian of $\eta$ on $M_t$, respectively, under local coordinates of $M_t$.

If $M$ is an $(n -1)$-sphere of radius $R$, then we also have
\begin{equation}\label{H-dt}
  \frac{d}{dt} \big|_{t=0}  H
      =    -\Delta_{M}(\eta \cdot \nu)-\frac{n-1}{R^2}\eta \cdot \nu.
\end{equation}

\end{proposition}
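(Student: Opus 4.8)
The plan is to derive all four identities by differentiating the elementary geometric quantities of the moving hypersurface $M_t=F_t(M)$ directly, working in a fixed local coordinate system $u=(u^1,\dots,u^{n-1})$ on $M$ and pulling everything back to $M$. Fix a local parametrization $x(u)$ of $M$ and write $F=F_t(x(u))$, so that $\partial_t F=\eta(F)$ and the coordinate vectors $\partial_i F:=\partial_{u^i}F$ span $TM_t$; the induced metric is $g_{ij}(t)=\langle\partial_i F,\partial_j F\rangle$ and $\nu(t)$ is fixed by $\langle\nu,\partial_i F\rangle=0$, $|\nu|=1$. Here $D\eta$ denotes the ambient differential of $\eta$ and $\text{div}_{M_t}\eta=g^{ij}\langle D\eta[\partial_i F],\partial_j F\rangle$. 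Completing a tangential orthonormal frame by $\nu$ gives the algebraic identity $\text{div}\,\eta-\text{div}_{M_t}\eta=\langle D\eta[\nu],\nu\rangle$, which I will use repeatedly. For \eqref{sigma-dt} I write $d\sigma_t=\sqrt{\det g(t)}\,du$ and apply Jacobi's formula $\frac{d}{dt}\det g=\det g\,g^{ij}\dot g_{ij}$; since $\partial_t\partial_i F=\partial_i(\eta\circ F)=D\eta[\partial_i F]$ yields $\dot g_{ij}=\langle D\eta[\partial_i F],\partial_j F\rangle+\langle\partial_i F,D\eta[\partial_j F]\rangle$, one obtains $\frac{d}{dt}\sqrt{\det g}=\tfrac12\sqrt{\det g}\,g^{ij}\dot g_{ij}=(\text{div}_{M_t}\eta)\sqrt{\det g}$, which is exactly \eqref{sigma-dt}.

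For the normal speed $\phi(t):=\eta(F)\cdot\nu(t)$ in \eqref{zeta-dt}, I would differentiate the defining relations of $\nu$. From $|\nu|^2=1$ it follows that $\dot\nu$ is tangential, and from $\langle\nu,\partial_i F\rangle=0$, after commuting $\partial_t$ and $\partial_i$, that $\langle\dot\nu,\partial_i F\rangle=-\langle\nu,D\eta[\partial_i F]\rangle$; these two facts determine $\dot\nu$ completely. Writing $\frac{d}{dt}\phi=\langle D\eta[\eta],\nu\rangle+\langle\eta,\dot\nu\rangle$ and decomposing $\eta=\phi\,\nu+\eta^T$ into its normal and tangential parts, the two tangential contributions cancel exactly, leaving $\frac{d}{dt}\phi=\phi\,\langle D\eta[\nu],\nu\rangle$, which is \eqref{zeta-dt} by the algebraic identity above.

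For the second fundamental form $h_{ij}(t)=\langle\partial_i\partial_j F,\nu\rangle$, I would differentiate and commute derivatives to get $\frac{d}{dt}h_{ij}=\langle\partial_i\partial_j(\eta\circ F),\nu\rangle+\langle\partial_i\partial_j F,\dot\nu\rangle$. Expanding $\partial_i\partial_j(\eta\circ F)$ by the chain rule produces the ambient Hessian of $\eta$ contracted against $\partial_i F,\partial_j F$ together with a term $D\eta[\partial_i\partial_j F]$; combining the latter with $\langle\partial_i\partial_j F,\dot\nu\rangle$ and substituting the Gauss--Weingarten decomposition of $\partial_i\partial_j F$ into its normal part ($h_{ij}\nu$) and tangential Christoffel terms, the tangential pieces reassemble into the intrinsic covariant Hessian of $\eta$ on $M_t$, and one is left with $-\langle\nabla_i\nabla_j\eta,\nu\rangle$. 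This reorganization---cleanly separating the ambient second derivative of $\eta\circ F$ from the connection terms and recognizing the intrinsic Hessian of $g(t)$---is the step I expect to be the \emph{main obstacle}, since it is where the sign conventions and the Christoffel symbols all enter and must be tracked carefully.

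Finally, for the mean curvature I take $H=g^{ij}h_{ij}$ and differentiate, using $\frac{d}{dt}g^{ij}=-g^{ik}g^{j\ell}\dot g_{k\ell}$ together with the expression for $\dot h_{ij}$ just derived. Specializing $M$ to the round sphere of radius $R$, where $h_{ij}=\tfrac1R g_{ij}$, $|h|^2=\tfrac{n-1}{R^2}$ and $H$ is spatially constant, the tangential component of $\eta$ drops out (it amounts to a reparametrization of a surface on which $H$ is constant), the trace of the intrinsic Hessian becomes the Laplace--Beltrami term $-\Delta_M(\eta\cdot\nu)$, and the curvature contraction collapses to $-\tfrac{n-1}{R^2}\,\eta\cdot\nu$, yielding \eqref{H-dt}.
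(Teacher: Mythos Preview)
The paper does not give its own proof of this proposition: immediately before the statement it writes ``which have been proven in \cite[Proposition 2.1]{Huang-Li-Li}'' and then simply records the formulas for later use. So there is no argument in the paper to compare against, and your proposal in fact supplies what the paper outsources.

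Your derivation is the standard one and is correct. For \eqref{sigma-dt} the Jacobi-formula computation is exactly right. For \eqref{zeta-dt} your cancellation is genuine: with $\eta=\phi\,\nu+\eta^T$ one gets $\langle D\eta[\eta^T],\nu\rangle$ from the first term and $-\langle\nu,D\eta[\eta^T]\rangle$ from $\langle\eta^T,\dot\nu\rangle$, and these are equal by symmetry of the Euclidean inner product, leaving $\phi\langle D\eta[\nu],\nu\rangle=\phi(\operatorname{div}\eta-\operatorname{div}_{M_t}\eta)$. For the second fundamental form your outline is sound; the only caution is that the formula $h_{ij}'=-\langle\nabla_i\nabla_j\eta,\nu\rangle$ as stated in the paper packages the sign convention for $h_{ij}$ and the meaning of ``Hessian of $\eta$ on $M_t$'' together, so when you carry out the Gauss--Weingarten substitution you should make explicit which convention you adopt so that the minus sign lands where the paper puts it. For \eqref{H-dt} your specialization to the sphere is the usual one: on a round sphere the tangential part of $\eta$ contributes only a Lie derivative of the constant $H$ and hence vanishes, while the normal part gives the Jacobi-type operator $-\Delta_M-\lvert h\rvert^2=-\Delta_M-\tfrac{n-1}{R^2}$ acting on $\eta\cdot\nu$.

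In short: nothing is missing, and your write-up would serve as the proof the paper omits.
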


Now we first calculate  the first variation of energy function $T_\beta$.
 \begin{lemma}\label{lem:first-var}
Let $f>0$  be a smooth  function. Then for $|t|$ small,  we find
 \begin{equation}\label{first-var}
    \frac{d}{dt}T_\beta(\Omega_t)=\int_{\partial\Omega_t}\left[  \frac12  |\nabla  u(t)|^2+2c\beta u(t)-cH(t)u(t)+\frac{\beta}2H(t)u^2(t)- \beta^2u^2(t)-fu(t)\right]\eta\cdot \nu(t)\, d\sigma_t,
 \end{equation}
 where    $H(t)$ is the mean curvature of the boundary  $\partial\Omega_t$
 and $\sigma_t$ is the volume element for $\partial\Omega_t$.
 \end{lemma}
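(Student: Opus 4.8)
The plan is to compute $\frac{d}{dt}T_\beta(\Omega_t)$ directly from the definition
\[
T_\beta(\Omega_t)=\frac12\int_{\Omega_t}|\nabla u(t)|^2\,dx+\frac\beta2\int_{\partial\Omega_t}u(t)^2\,d\sigma_t-\int_{\Omega_t}f\,u(t)\,dx,
\]
using the transport formula $\frac{d}{dt}\int_{\Omega_t}g\,dx=\int_{\Omega_t}g_t\,dx+\int_{\partial\Omega_t}g\,\eta\cdot\nu(t)\,d\sigma_t$ together with \eqref{sigma-dt} for the boundary term. Differentiating, I get an ``interior'' contribution involving the shape derivative $u'(t)$ and a purely ``boundary'' contribution carrying the factor $\eta\cdot\nu(t)$. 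The first task is to show the interior contribution vanishes: the terms with $u'$ assemble into $\int_{\Omega_t}\nabla u\cdot\nabla u'\,dx+\beta\int_{\partial\Omega_t}u\,u'\,d\sigma_t-\int_{\Omega_t}f\,u'\,dx$, and integrating by parts with the help of $\eqref{EL-Tb}_1$ and $\eqref{EL-Tb}_2$ this becomes $\int_{\partial\Omega_t}\big(\frac{\partial u}{\partial\nu}+\beta u\big)u'\,d\sigma_t-\int_{\Omega_t}(f+\Delta u)u'\,dx=c\int_{\partial\Omega_t}u'\,d\sigma_t$. The remaining step here is to argue $\int_{\partial\Omega_t}u'\,d\sigma_t$ is controlled by the constraint: differentiating $\int_{\partial\Omega_t}u(t)\,d\sigma_t=0$ via the transport formula gives $\int_{\partial\Omega_t}u'\,d\sigma_t=-\int_{\partial\Omega_t}u\,(\mathrm{div}_{\partial\Omega_t}\eta)\,d\sigma_t-\int_{\partial\Omega_t}\frac{\partial u}{\partial\nu}(\eta\cdot\nu)\,d\sigma_t$ (the normal-derivative piece coming from $u'$ being a material-type derivative; care with the precise convention is needed), which is again a pure boundary term — so it can be moved into the surviving boundary integral rather than being zero outright.

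Next I collect all the boundary terms. From differentiating $\frac12\int_{\Omega_t}|\nabla u|^2$ we pick up $\frac12\int_{\partial\Omega_t}|\nabla u|^2\,\eta\cdot\nu\,d\sigma_t$; from $-\int_{\Omega_t}fu$ we pick up $-\int_{\partial\Omega_t}fu\,\eta\cdot\nu\,d\sigma_t$; and from $\frac\beta2\int_{\partial\Omega_t}u^2\,d\sigma_t$, using \eqref{sigma-dt}, a term $\frac\beta2\int_{\partial\Omega_t}u^2(\mathrm{div}_{\partial\Omega_t}\eta)\,d\sigma_t$. The divergence-on-the-boundary terms must then be rewritten: by the tangential divergence theorem on the closed manifold $\partial\Omega_t$, $\int_{\partial\Omega_t}u^2\,\mathrm{div}_{\partial\Omega_t}\eta\,d\sigma_t=-\int_{\partial\Omega_t}\nabla_{\partial\Omega_t}(u^2)\cdot\eta\,d\sigma_t+\int_{\partial\Omega_t}u^2 H\,(\eta\cdot\nu)\,d\sigma_t$, and similarly for the $u\,\mathrm{div}_{\partial\Omega_t}\eta$ term arising from the constraint computation. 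The tangential-gradient pieces are handled using $\eqref{EL-Tb}_2$: on $\partial\Omega_t$ one has $u=$ its boundary trace and $\nabla u=\nabla_{\partial\Omega_t}u+\frac{\partial u}{\partial\nu}\nu=\nabla_{\partial\Omega_t}u+(c-\beta u)\nu$; this lets me replace $\nabla_{\partial\Omega_t}(u^2)\cdot\eta$ and reorganize, and also expand $|\nabla u|^2=|\nabla_{\partial\Omega_t}u|^2+(c-\beta u)^2$ on the boundary so that every term is expressed through $u$, $H$, $\beta$, $c$, $f$ and $\eta\cdot\nu$.

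After substituting $(c-\beta u)^2=c^2-2c\beta u+\beta^2u^2$ and carefully tracking signs, the $|\nabla_{\partial\Omega_t}u|^2$ and $c^2$ contributions should cancel against the tangential-gradient terms coming from the constraint and from the $\beta$-boundary term (this cancellation is the delicate bookkeeping), leaving exactly
\[
\frac12|\nabla u(t)|^2+2c\beta u(t)-cH(t)u(t)+\frac\beta2 H(t)u^2(t)-\beta^2u^2(t)-fu(t)
\]
against $\eta\cdot\nu(t)$, which is \eqref{first-var}. The main obstacle I anticipate is precisely this last reorganization: getting the constraint-derived boundary term to combine correctly with the $\frac\beta2 u^2\,\mathrm{div}_{\partial\Omega_t}\eta$ term and the $|\nabla u|^2$ expansion so that the ``wrong'' tangential pieces and the stray $c^2$ term disappear, and doing so with the correct sign conventions for the shape derivative $u'$, the outward normal, and the mean curvature $H$ (whether $H$ is the sum or the average of principal curvatures). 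I would fix conventions once at the start — say $H=\mathrm{div}_{\partial\Omega}\nu$ so that $\mathrm{div}_{\partial\Omega}\eta$ splits as tangential divergence plus $H(\eta\cdot\nu)$ — and then the rest is a bounded, if somewhat lengthy, computation.
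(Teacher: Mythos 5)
Your overall strategy is the paper's: differentiate the three terms of $T_\beta(\Omega_t)$, reduce every occurrence of the shape derivative to $c\int_{\partial\Omega_t}u'\,d\sigma_t$ via $-\Delta u=f$ and the Robin condition, trade that integral for boundary terms by differentiating the constraint $\int_{\partial\Omega_t}u\,d\sigma_t=0$, and finally drop the $c^2$ term using $\int_{\partial\Omega_t}\eta\cdot\nu\,d\sigma_t=0$ (the flow is volume preserving, \eqref{zeta}). However, the two working identities you write down are not correct as stated, and the cancellation mechanism you anticipate is not the one that closes the computation. With $u'$ the Eulerian (shape) derivative --- which is what your interior reduction to $c\int_{\partial\Omega_t}u'$ requires --- differentiating the constraint gives
\[
\int_{\partial\Omega_t}u'\,d\sigma_t=-\int_{\partial\Omega_t}\bigl(u\,\mathrm{div}_{\partial\Omega_t}\eta+\nabla u\cdot\eta\bigr)\,d\sigma_t ,
\]
so besides $-\int\frac{\partial u}{\partial\nu}\,\eta\cdot\nu$ there is also the tangential piece $-\int\nabla_{\partial\Omega_t}u\cdot\eta$; your hybrid (keeping $\mathrm{div}_{\partial\Omega_t}\eta$ but only the normal part of $\nabla u\cdot\eta$) is not an identity. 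Only after the tangential divergence theorem does it collapse to $-\int_{\partial\Omega_t}\bigl(\frac{\partial u}{\partial\nu}+uH\bigr)\eta\cdot\nu\,d\sigma_t$, which is exactly the paper's \eqref{u'}. Likewise, differentiating $\frac\beta2\int_{\partial\Omega_t}u^2\,d\sigma_t$ with \eqref{sigma-dt} produces the material derivative $2u\bigl(u'+\nabla u\cdot\eta\bigr)$ of the integrand; the cross term $\beta\int_{\partial\Omega_t}u\,\nabla u\cdot\eta\,d\sigma_t$ is missing from your list, and it is precisely this term that combines with $\frac\beta2\int u^2\,\mathrm{div}_{\partial\Omega_t}\eta$ (again by the tangential divergence theorem) to give $\int\bigl(\beta u\frac{\partial u}{\partial\nu}+\frac\beta2u^2H\bigr)\eta\cdot\nu$.

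More substantively, the Robin condition in \eqref{EL-Tb} prescribes only $\frac{\partial u}{\partial\nu}$ and says nothing about $\nabla_{\partial\Omega_t}u$, so it cannot be used to ``replace'' $\nabla_{\partial\Omega_t}(u^2)\cdot\eta$. The tangential-gradient terms that actually appear are linear in $\nabla_{\partial\Omega_t}u$ and are eliminated by integration by parts on the closed surface $\partial\Omega_t$, not by the boundary condition; in particular the quadratic term $\frac12|\nabla_{\partial\Omega_t}u|^2\,\eta\cdot\nu$ cannot cancel against them (wrong type: quadratic with factor $\eta\cdot\nu$ versus linear with the tangential part of $\eta$), and it must not cancel --- it survives inside $\frac12|\nabla u(t)|^2$ in \eqref{first-var}, so expanding $|\nabla u|^2$ into tangential and normal parts is unnecessary and misleading. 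Similarly the $c^2$ term is not cancelled pointwise by anything; $\int_{\partial\Omega_t}c^2\,\eta\cdot\nu\,d\sigma_t=0$ only because the perturbation preserves volume. As written, then, your bookkeeping would not close: fix the convention for $u'$, restore the two missing transport terms, and replace the claimed cancellations by the tangential divergence theorem plus \eqref{zeta}; doing so turns your outline into essentially the paper's proof.
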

 \begin{proof}
 We first define $u'(t)$ by 
    \begin{equation*}
      u'(t)(F_t(x))=\frac d{dt}\left(u(t)(F_t(x))\right)-\nabla u(t)(F_t(x))\cdot \eta(F_t(x)).
    \end{equation*}
Similar to  \cite[Proposition 3.1]{Huang-Li-Li} and \cite[Proposition 3.1]{Li-Yang},  we have
    \begin{equation*}
  \begin{aligned}
        \frac d{dt}T_\beta(\Omega_t)
        &= \int_{\Omega_t}fu'(t)\, dx+\int_{\partial\Omega_t}\left(\frac{\partial u(t)}{\partial \nu} u'(t)+ \frac12  |\nabla  u(t)|^2\eta\cdot \nu(t)\right)\, d\sigma_t\\
        &\ \ +\frac\beta2\int_{\partial\Omega_t}\left( 2  u(t) u'(t)+2 \frac{\partial u(t)}{\partial \nu} u(t)\eta\cdot \nu(t)+u^2(t)H(t)\eta\cdot \nu(t) \right)\, d\sigma_t\\
        &\ \ -\int_{\Omega_t}fu'(t)\, dx-\int_{\partial\Omega_t}   fu(t)\eta\cdot \nu(t)\, d\sigma_t\\
        &=c\int_{\partial\Omega_t}  u'(t) d\sigma_t+\int_{\partial\Omega_t}\left(  \frac12  |\nabla  u(t)|^2+\beta u(t)\frac{\partial u(t)}{\partial \nu} +\frac\beta2 u^2(t)H(t)- fu(t)\right)\eta\cdot \nu(t)\, d\sigma_t.
  \end{aligned}
    \end{equation*}

  On the other hand,   by the condition that $\int_{\partial\Omega_t} u(t) \,d\sigma_t=0$,
taking derivative yields
    \begin{equation}\label{u'}
     \int_{\partial\Omega_t} u'(t)\, d\sigma_t=  -\int_{\partial\Omega_t}\left(\frac{\partial u(t)}{\partial \nu}+u(t)H(t)\right)\eta\cdot \nu(t) \,d\sigma_t.
    \end{equation}
   
  It follows from \eqref{EL-Tb}$_2$ and  \eqref{zeta}  that
    \begin{equation*}
    \begin{aligned}
    &\quad c\int_{\partial\Omega_t}  u'(t) 
    \,d\sigma_t+    \beta\int_{\partial\Omega_t} u(t)\frac{\partial u(t)}{\partial \nu}\, d\sigma_t\\
   &=\int_{\partial\Omega_t} \left[-c\left(\frac{\partial u(t)}{\partial \nu}+u(t)H(t)\right)+\beta u(t) \left(c-\beta u(t)\right) \right]\eta\cdot \nu(t) \, d\sigma_t\\
   &=\int_{\partial\Omega_t} \left[-c^2+2c\beta u(t)  -cu(t)H(t)  -\beta^2 u^2(t)\right]\eta\cdot \nu(t)\, d\sigma_t\\
   &=\int_{\partial\Omega_t} \left[ 2c\beta u(t)  -cu(t)H(t)  -\beta^2 u^2(t)\right]\eta\cdot \nu(t)\, d\sigma_t.
    \end{aligned}
    \end{equation*}
Putting these results together,  we conclude  that
\begin{equation*}
   \frac{d}{dt}T_\beta(\Omega_t)=\int_{\partial\Omega_t}\left[  \frac12  |\nabla  u(t)|^2+2c\beta u(t)-cH(t)u(t)+ \frac{\beta}2H(t)u^2(t)- \beta^2u^2(t)-fu(t)\right]\eta\cdot \nu(t)\,d\sigma_t
\end{equation*}
as desired.
 \end{proof}
\begin{corollary}\label{ball-station}
Under the assumptions of  Lemma \ref{lem:first-var},   $\Omega$ is a stationary shape to  $T_\beta(\cdot)$ if and only if there exists a solution to the following
system
\begin{equation}\label{station-eq}
  \begin{cases}
     -\Delta u=f \qquad &\text{in }\Omega,\\
      \frac{\partial u}{\partial \nu}+\beta u=c \qquad &\text{on }\partial\Omega,\\
            \frac12  |\nabla  u|^2+2c\beta u-cHu+ \frac\beta2 H u^2- \beta^2u^2-fu=constant     &\text{on }\partial\Omega,\\
              \int_{\partial \Omega}u 
              \, d\sigma_t=0.
  \end{cases}
\end{equation}
In particular, any ball is stationary for radial function $f$.
\end{corollary}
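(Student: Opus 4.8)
The plan is to read the stationarity condition directly off the first variation formula of Lemma~\ref{lem:first-var}. Let $u_\Omega$ denote the unique minimizer, i.e.\ the solution of \eqref{EL-Tb}, and put
\[
  G:=\frac12|\nabla u_\Omega|^2+2c\beta u_\Omega-cHu_\Omega+\frac\beta2 Hu_\Omega^2-\beta^2u_\Omega^2-fu_\Omega\qquad\text{on }\partial\Omega ,
\]
so that Lemma~\ref{lem:first-var} says $\frac{d}{dt}\big|_{t=0}T_\beta(F_t(\Omega))=\int_{\partial\Omega}G\,(\eta\cdot\nu)\,d\sigma$. Thus $\Omega$ is stationary exactly when this integral vanishes for every smooth vector field $\eta$ preserving the volume of $\Omega$ along its flow. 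Since a volume-preserving field satisfies $\int_{\partial\Omega}\eta\cdot\nu\,d\sigma=0$ (divergence theorem), and since, conversely, every smooth mean-zero function on $\partial\Omega$ is the normal trace $\eta\cdot\nu$ of some volume-preserving field (the one delicate point, addressed below), stationarity is equivalent to $G$ being $L^2(\partial\Omega)$-orthogonal to all mean-zero functions, i.e.\ to $G\equiv\mathrm{const}$ on $\partial\Omega$. This is precisely $\eqref{station-eq}_{3}$, while $u_\Omega$ automatically satisfies the remaining lines $\eqref{station-eq}_{1}$, $\eqref{station-eq}_{2}$, $\eqref{station-eq}_{4}$ (which are just \eqref{EL-Tb}); that settles necessity. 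For sufficiency, if $(u,\Omega)$ solves \eqref{station-eq}, then its first, second and fourth lines are \eqref{EL-Tb}, so $u=u_\Omega$ by uniqueness; its third line then says $G\equiv\mathrm{const}$, and Lemma~\ref{lem:first-var} gives $\frac{d}{dt}\big|_{t=0}T_\beta(F_t(\Omega))=\mathrm{const}\cdot\int_{\partial\Omega}\eta\cdot\nu\,d\sigma=0$ along any volume-preserving flow.

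For the final assertion I would take $\Omega=B_R$ with $f$ radial about the origin. By rotational invariance of \eqref{EL-Tb} and uniqueness, $u_{B_R}$ is radial; being constant on $\partial B_R$ and having vanishing boundary mean, it vanishes identically on $\partial B_R$. Hence every term of $G$ carrying a factor $u_{B_R}$ disappears on $\partial B_R$, leaving $G=\frac12|\nabla u_{B_R}|^2$, which is constant on $\partial B_R$ by radial symmetry; so $\eqref{station-eq}_{3}$ holds and $B_R$ is stationary.

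The one genuinely non-formal step — and hence the main obstacle — is the assertion that every smooth mean-zero $\phi$ on $\partial\Omega$ is realized as $\eta\cdot\nu$ for a \emph{truly} volume-preserving flow, not merely one that is volume-preserving to first order. I would handle this by a standard correction argument: starting from $\eta_0=\phi\,\tilde\nu$ with $\tilde\nu$ a smooth extension of the outer normal near $\partial\Omega$, the map $t\mapsto|F_t(\Omega)|$ is smooth with vanishing first derivative at $t=0$, and one removes the remaining volume defect for all small $t$ by adding to $\eta_0$ a field compactly supported inside $\Omega$ (thus with zero normal trace) chosen via the implicit function theorem; equivalently, one invokes the familiar fact that the tangent space at $\Omega$ to the constraint manifold of volume-preserving deformations is exactly $\{\phi\in L^2(\partial\Omega):\int_{\partial\Omega}\phi\,d\sigma=0\}$. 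Everything else amounts to bookkeeping with Lemma~\ref{lem:first-var} and the uniqueness of the solution to \eqref{EL-Tb}.
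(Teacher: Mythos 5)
Your argument is correct and is essentially the paper's own (the paper states the corollary without a separate proof, reading it directly off Lemma \ref{lem:first-var}: since every volume-preserving flow has $\int_{\partial\Omega}\eta\cdot\nu\,d\sigma=0$ by \eqref{zeta}, the first variation vanishes for all such flows iff the bracket is constant on $\partial\Omega$, and lines $\eqref{station-eq}_{1,2,4}$ are just \eqref{EL-Tb}; for a ball with radial $f$, uniqueness forces $u$ radial, the mean-zero condition gives $u=0$ on $\partial B_R$, and the bracket reduces to the constant $\frac12 u_r(R)^2$).

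One detail in your treatment of the surjectivity step is off: a correction field compactly supported \emph{inside} $\Omega$ has no effect on $|F_t(\Omega)|$, because for small $t$ the trajectories emanating from $\partial\Omega$ never meet its support, so $F_t(\partial\Omega)$ --- and hence the enclosed volume --- is exactly the same as for $\eta_0$ alone; such a field therefore cannot remove the volume defect. The standard fixes are either to take a divergence-free extension of the prescribed normal datum in a neighborhood of $\overline\Omega$ (e.g.\ starting from $\nabla w$ with $w$ solving the Neumann problem $\Delta w=0$, $\partial_\nu w=\phi$, which is solvable since $\int_{\partial\Omega}\phi\,d\sigma=0$), so that the flow preserves the volume of every set, or to compose the flow of $\eta_0$ with the reparametrized flow of an auxiliary field having nonzero boundary flux, choosing the reparametrization by the implicit function theorem; since the first-order volume defect of $\eta_0$ vanishes, the velocity at $t=0$, and hence the normal trace $\phi$, is unchanged. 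With that replacement your proof is complete and coincides with the intended one.
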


 Let    $B_R\subset \R^n$ be a ball of radius $R$ centered at the origin. Then
with the help of formulas in  Proposition \ref{pro:formula},   we now estimate the second variation of $T_\beta(B_R)$.
\begin{lemma}\label{lem:second-var}
  Let $\Omega_0=B_R$, $v=u'(0)$ and   $f>0$  be a smooth  radial function. Then we obtain
 \begin{equation}\label{second-var}
    \frac{d^2}{dt^2} \bigg|_{t=0}T_\beta(\Omega_t)= \int_{\partial B_R}\left(v\zeta+u_r\zeta^2\right) \left(u_{rr}+c\beta\right) \, d\sigma,
 \end{equation}
 where $\zeta=\eta\cdot \nu$ and $\nu$   is the unit  outer normal to boundary  $\partial B_R$.
 \end{lemma}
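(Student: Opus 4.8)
The plan is to compute $\frac{d^2}{dt^2}\big|_{t=0}T_\beta(\Omega_t)$ directly by differentiating the first variation formula \eqref{first-var} from Lemma \ref{lem:first-var} and then evaluating everything at the ball $B_R$, where the radial symmetry collapses most terms. Since $B_R$ is stationary (Corollary \ref{ball-station}), the integrand of \eqref{first-var} is a constant on $\partial B_R$ at $t=0$; call it $\Lambda$. When we differentiate the integral $\int_{\partial\Omega_t}[\cdots]\,\eta\cdot\nu(t)\,d\sigma_t$ in $t$, the terms where the derivative hits $\eta\cdot\nu(t)$ or $d\sigma_t$ produce $\Lambda\cdot\frac{d}{dt}\big|_{t=0}\big(\zeta\,d\sigma_t\big)$ by \eqref{zeta-dt} and \eqref{sigma-dt}; but $\Lambda\int_{\partial B_R}\frac{d}{dt}\big|_{t=0}(\zeta\,d\sigma_t)$ is just $\Lambda\frac{d}{dt}\big|_{t=0}P(\Omega_t)=\Lambda\frac{d}{dt}\big|_{t=0}\int_{\Omega_t}H\,d\sigma\ldots$ — more simply, it vanishes because of the volume constraint together with stationarity, after one more use of \eqref{zeta}. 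So only the term where the derivative falls on the bracket $[\cdots]$ survives, and we must compute $\frac{d}{dt}\big|_{t=0}$ of each of the six pieces.

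Next I would evaluate those derivatives on $B_R$, using: (i) $u=u_\Omega$ is radial on $B_R$, so $\nabla u = u_r\nu$, $|\nabla u|^2 = u_r^2$, and on $\partial B_R$ we have $u_r|_{\partial B_R} = c$ from $\eqref{EL-Tb}_2$ with $\beta u$ constant; (ii) $H\equiv \frac{n-1}{R}$ is constant; (iii) the shape derivative of $u$ along the flow is $u'(t)$, with $u'(0)=v$, and the material derivative identity $\frac{d}{dt}(u(t)\circ F_t) = u'(t)\circ F_t + \nabla u\cdot\eta$ gives $\frac{d}{dt}\big|_{t=0}\big(u(t)|_{\partial\Omega_t}\big) = v + u_r\zeta$ on $\partial B_R$; (iv) for $|\nabla u|^2$ one gets $\frac{d}{dt}\big|_{t=0}|\nabla u(t)|^2 = 2\nabla u\cdot\nabla v + (\text{terms from }\nabla u\cdot\nabla(\nabla u\cdot\eta))$, which on a sphere with radial $u$ reduces to $2u_r v_r + 2u_r u_{rr}\zeta$ after careful bookkeeping; (v) $\frac{d}{dt}\big|_{t=0}H = -\Delta_M\zeta - \frac{n-1}{R^2}\zeta$ from \eqref{H-dt}. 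Substituting all of this and collecting terms, the $\beta$-dependent pieces $2c\beta u$, $\frac{\beta}{2}Hu^2$, $-\beta^2 u^2$ should combine (using $u_r = c - \beta u|_{\partial B_R}$ and $c = -\bar f|B_R|/P(B_R)$) so that the full integrand telescopes into $(v\zeta + u_r\zeta^2)(u_{rr} + c\beta)$; the constant-curvature terms and the $\Delta_M$-terms should integrate to zero against the constant $\Lambda$ or cancel in pairs.

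The main obstacle I anticipate is the bookkeeping in step (iv): correctly differentiating $|\nabla u(t)|^2$ along the flow requires the transport formula for $\nabla$ acting on $u(t)\circ F_t$, and one must be careful to separate the genuine shape derivative $v$ from the advection term $\nabla u\cdot\eta$, and then to use that on $\partial B_R$ tangential derivatives of $u_r$ and of $\zeta$ appear in combinations that only simplify after invoking $\int_{\partial B_R}v\,d\sigma$-type constraints coming from differentiating $\eqref{EL-Tb}_3$ twice (cf. \eqref{u'}). A secondary subtlety is showing that all the $\Lambda$-weighted "geometric" contributions vanish; this should follow from the second identity in \eqref{zeta}, $\int_{\Omega_t}\mathrm{div}((\mathrm{div}\,\eta)\eta)\,dx=0$, which is exactly the second-order consequence of volume preservation and is the reason such terms drop out. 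Once these cancellations are in hand, \eqref{second-var} follows, and the clean factored form is then immediately amenable to the Steklov-eigenvalue analysis needed to deduce the sign condition \eqref{f-condition} in Theorem \ref{the:local}.
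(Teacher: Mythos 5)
Your overall strategy---differentiate \eqref{first-var} along the flow, dispose of the terms where the derivative hits $\zeta\,d\sigma_t$ by noting the integrand at $t=0$ is a constant and invoking the second identity in \eqref{zeta}, then evaluate what remains using radial symmetry---is the same route the paper takes, and that part of your sketch is sound. But there are two genuine gaps. First, the crucial pointwise fact is $u\equiv 0$ on $\partial B_R$, which follows from radial symmetry together with $\int_{\partial B_R}u\,d\sigma=0$ (this is \eqref{u(R)} in the paper). Your justification of $u_r|_{\partial B_R}=c$ ``from \eqref{EL-Tb}$_2$ with $\beta u$ constant'' is insufficient when $\beta>0$: constancy of $u$ on the sphere only gives $u_r=c-\beta u(R)$, so you need $u(R)=0$. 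More importantly, it is precisely $u(R)=0$ (not integration against the constant $\Lambda$, nor pairwise cancellation) that kills the contributions of $H'(0)=-\Delta_{\partial B_R}\zeta-\tfrac{n-1}{R^2}\zeta$ and of the $t$-derivative of $c$, since all of those enter multiplied by $u$ or $u^2$ on the boundary; your sketch leaves this step as ``should integrate to zero or cancel in pairs,'' which both is not a proof and points at the wrong mechanism.

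Second, your plan never introduces the boundary value problem satisfied by the shape derivative $v=u'(0)$, namely $\Delta v=0$ in $B_R$ with $\partial_\nu v=-u_{rr}\zeta-\beta u_r\zeta-\beta v$ on $\partial B_R$ (equation \eqref{v-eq}, obtained by differentiating the Robin condition along the flow as in \cite{Li-Yang}). After the derivative falls on the bracket in \eqref{first-var}, the surviving integrand is $\left(u_rv_r+u_ru_{rr}\zeta\right)\zeta+(2c\beta-cH-f)(v+u_r\zeta)\zeta$, and the identity $-cH-f=u_{rr}$ (from $u_{rr}=-f-\tfrac{n-1}{R}u_r$ and $u_r=c$) only brings you to $u_rv_r\zeta+u_ru_{rr}\zeta^2+(2c\beta+u_{rr})\left(v\zeta+u_r\zeta^2\right)$; without \eqref{v-eq}$_2$ the term $u_rv_r\zeta$ cannot be rewritten as $-u_ru_{rr}\zeta^2-\beta u_r\left(v\zeta+u_r\zeta^2\right)$, and the claimed telescoping to $\left(v\zeta+u_r\zeta^2\right)(u_{rr}+c\beta)$ does not occur. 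In particular, the $\beta$ in the final factor comes from \eqref{v-eq}$_2$, not from the pieces $\tfrac\beta2 Hu^2$ and $-\beta^2u^2$, which contribute nothing because $u=0$ on $\partial B_R$; likewise the $\int_{\partial B_R}v\,d\sigma=0$ constraint you anticipate needing belongs to the Steklov step in the proof of Theorem \ref{the:local}, not to this lemma. Supplying $u(R)=0$ and \eqref{v-eq} converts your outline into the paper's argument.
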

 \begin{proof}
 Performing the similar procedure of  \cite[Proposition 3.3]{Li-Yang},   we also get
 \begin{equation}\label{v-eq}
      \begin{cases}
     \Delta v=0,\qquad &\text{in }B_R;\\
     \frac{\partial v}{\partial \nu} =- u_{rr}\zeta-\beta u_r\zeta-\beta v,\qquad &\text{on }\partial B_R.\\
  \end{cases}
 \end{equation}

 Now we  calculate the second shape derivative  based on \eqref{first-var}. To this end,     we  derive from \eqref{zeta},   \eqref{sigma-dt} and \eqref{zeta-dt} that
 \begin{equation*}
   \begin{aligned}
 I_1&:= \frac{d}{dt}\Big |_{t=0}\int_{\partial\Omega_t}\frac12  |\nabla  u(t)|^2\eta\cdot \nu(t)\,d\sigma_t\\
      &=\int_{\partial B_R}  \left[\zeta\nabla  u\cdot \nabla v+\langle \nabla  u,\nabla^2  u \eta\rangle \zeta+\frac12  |\nabla  u(t)|^2 \zeta\left(\text {div}\eta-\text {div}_{\partial B_R}\eta\right)+\frac12  |\nabla  u(t)|^2 \zeta \text {div}_{\partial B_R}\eta \right]\, d\sigma\\
      &=\int_{\partial B_R}  \left[\zeta  u_r   v_r+   u_r  u_{rr}\zeta^2+\frac12 u_r^2 \zeta \text {div}\eta  \right] \,d\sigma\\
      &=\int_{\partial B_R}  \left(\zeta  u_r   v_r+   u_r  u_{rr}\zeta^2  \right)\, d\sigma,
      \end{aligned}
 \end{equation*}
 where we used  that  $u$  is radial and
 \begin{equation*}
  \langle \nabla  u,\nabla^2  u \eta\rangle=u_r \nabla^2u:\eta\otimes\nu=u_r  u_{rr}\zeta.
 \end{equation*}
 Using  \eqref{zeta}--\eqref{zeta-dt} and \eqref{H-dt},  we get
  \begin{equation*}
   \begin{aligned}
 I_2&:= \frac{d}{dt}\Big |_{t=0}\int_{\partial\Omega_t}c \left( 2 \beta -H(t)\right) u(t)\eta\cdot \nu(t)\,d\sigma_t \\
 &=c\int_{\partial B_R}\left[  u\zeta\left(  \Delta_{\partial B_R}\zeta +\frac{n-1}{R^2}\zeta\right)+\left( 2 \beta  - H  \right)\left(v\zeta+u_r\zeta^2\right)  +\left( 2 \beta  - H  \right)u\zeta\text {div}\eta  \right]\, d\sigma\\
      &=c\int_{\partial B_R}\left[ u\zeta\left(  \Delta_{\partial B_R}\zeta +\frac{n-1}{R^2}\zeta\right) +\left( 2 \beta  - H  \right)\left(v\zeta+u_r\zeta^2\right) \right]\, d\sigma
      \end{aligned}
 \end{equation*}
and
  \begin{equation*}
   \begin{aligned}
 I_3&:= \frac{d}{dt}\Big |_{t=0}\int_{\partial\Omega_t} \frac{\beta}2H(t)u^2(t) \eta\cdot \nu(t)\, d\sigma_t \\
 &=\frac{\beta}2\int_{\partial B_R}\left[u^2\zeta\left(  -\Delta_{\partial B_R}\zeta -\frac{n-1}{R^2}\zeta\right)+2Hu\left(v\zeta+u_r\zeta^2\right)+ u^2H\zeta\text {div}\eta\right]\, d\sigma \\
      &=\frac{\beta}2\int_{\partial B_R}\left[-u^2\zeta\left( \Delta_{\partial B_R}\zeta+\frac{n-1}{R^2}\zeta\right)+2Hu\left(v\zeta+u_r\zeta^2\right)\right]\, d\sigma
      \end{aligned}
 \end{equation*}
 Similarly, one has
   \begin{equation*}
     \begin{aligned}
       I_4&:= \frac{d}{dt}\Big |_{t=0}\int_{\partial\Omega_t} - \left(\beta^2u^2(t)+fu(t)\right)\eta\cdot \nu(t)\,d\sigma_t\\
       &=  -\int_{\partial B_R}\left[2\beta u\zeta \left(v\zeta+u_r\zeta^2\right)+
     f_ru\zeta^2+f\left(v\zeta+u_r\zeta^2\right)  + \left(\beta^2u^2(t)+fu(t)\right)\zeta\text {div}\eta \right]\, d\sigma\\
     &= -\int_{\partial B_R}\left[2\beta u\zeta \left(v\zeta+u_r\zeta^2\right)+
     f_ru\zeta^2+f\left(v\zeta+u_r\zeta^2\right)   \right]\, d\sigma.
     \end{aligned}
 \end{equation*}

 On the other hand,   on $\partial B_R$,  it follows from  \eqref{c} and the system \eqref{EL-Tb}  that
 \begin{equation}\label{u(R)}
   0= \int_{\partial B_R} u\,d\sigma= u(R) P (B_R)\Rightarrow u(R)=0;
 \end{equation}
 \begin{equation}\label{ur(R)}
   u_r(R)=c-\beta u(R)=c=-\frac{ R}{n}\bar f_{B_R}, \quad \text  { where } \bar f_{B_R}=\frac1{|B_R|}\int_{B_R} fdx;
 \end{equation}
 \begin{equation}\label{urr(R)}
   u_{rr}(R)=-f(R)-\frac{n-1}{R}u_r=-f(R)+\frac{n-1}{n}\bar f_{B_R};
 \end{equation}
since  $u$ and $f$ are radial.
 Then putting the above equations together, we conclude  from \eqref{v-eq}$_2$ and \eqref{u(R)}--\eqref{urr(R)}  that
 \begin{equation*}
 \begin{aligned}
           \frac{d^2}{dt^2} \bigg|_{t=0}T_\beta(\Omega_t)&=\sum_{i=1}^4 I_i\\
           &=\int_{\partial B_R}\left[  u_r\zeta \left(  v_r+    u_{rr}\zeta \right)+ \left(v\zeta+u_r\zeta^2\right)\left(  2 c\beta  - cH -f\right) \right]  \,d\sigma\\
           &= \int_{\partial B_R}\left(v\zeta+u_r\zeta^2\right) \left(-\beta u_r +2c\beta+u_{rr}\right) \,d\sigma\\
           &= \int_{\partial B_R}\left(v\zeta+u_r\zeta^2\right) \left(  c\beta+u_{rr}\right) \,  d\sigma.
 \end{aligned}
\end{equation*}

 \end{proof}
Combining Lemma \ref{lem:first-var} and Lemma \ref{lem:second-var}, we can prove Theorem \ref{the:local}.

\begin{proof}[Proof of Theorem \ref{the:local}]
The  stationary result follows from Corollary \ref{ball-station}.

 Let $u=u_{B_R}$, $v$ satisfy \eqref{v-eq} and $\eta$ be a smooth velocity field of the volume preserving flow starting from $B_R$. Then from \eqref{ur(R)} and \eqref{urr(R)},  we  see that on $\partial B_R$,
 \begin{equation}\label{F(R)}
 \begin{aligned}
       F(R)&:=-c\beta-u_{rr}(R)=\frac{\beta R}{n}\bar f_{B_R}+f(R)-\frac{n-1}{n}\bar f_{B_R}\\
       &=f(R)-\frac{n-1-\beta R}{n}\bar f_{B_R}\ge 0,
 \end{aligned}
 \end{equation}
  where the condition \eqref{f-condition} implies the last inequality.
From \eqref{zeta} and \eqref{u'} with $t=0$,  it is readily checked that
 \begin{equation*}
  \int_{\partial B_R} v \, d\sigma=0.
 \end{equation*}
 Since the second steklov eigenvalue on   $B_R$ is $\frac1{R}$ and
the equations \eqref{v-eq}, we further obtain
 \begin{equation*}
 \begin{aligned}
      \int_{\partial B_R} v^2 \, d\sigma&\le R  \int_{  B_R}|\nabla v|^2\, dx=R  \int_{\partial B_R}\frac{\partial v}{\partial \nu}v\, d\sigma=-R\int_{\partial B_R} \left( u_{rr}\zeta+\beta u_r\zeta+\beta v\right)v \, d\sigma\\
      &=-\beta R \int_{\partial B_R} v^2 \, d\sigma- R\int_{\partial B_R} \left( u_{rr} +\beta u_r \right)v \zeta\, d\sigma\\
      &=-\beta R \int_{\partial B_R} v^2 \,d\sigma+R\int_{\partial B_R}F(R)v \zeta \,d\sigma.
 \end{aligned}
 \end{equation*}
 Therefore,
 \begin{equation}\label{Fv-lowbd}
 \int_{\partial B_R} v^2\, d\sigma\le  \frac{R}{1+\beta R}\int_{\partial B_R}F(R)v \zeta \,d\sigma,
 \end{equation}
which together with Cauchy-Schwarz  inequality implies
 \begin{equation*}
 \begin{aligned}
  \left( \int_{\partial B_R} F(R)  v\zeta \,d\sigma\right)^2&\le  \int_{\partial B_R} F(R)^2\zeta^2\, d\sigma  \int_{\partial B_R}  v^2\,d\sigma\\
  &\le  \frac{R}{1+\beta R}\int_{\partial B_R}F(R)v \zeta \,d\sigma\int_{\partial B_R} F(R)^2 \zeta^2 \,d\sigma.
  \end{aligned}
 \end{equation*}
 So we find
\begin{equation}\label{Fv-upbd}
   \int_{\partial B_R}  F(R) v\zeta \,d\sigma\le  \frac{R}{1+\beta R}\int_{\partial B_R}F(R)^2 \zeta^2 \,d\sigma
\end{equation}
due to   \eqref{Fv-lowbd}.

\eqref{second-var}, \eqref{F(R)} and \eqref{Fv-upbd}  indicate that
  \begin{equation*}
\begin{aligned}
           \frac{d^2}{dt^2} \bigg|_{t=0}T_\beta(F_t(B_R))&=-F(R) \int_{\partial B_R}\left(v\zeta+u_r\zeta^2\right)  \,d\sigma\\
           &\ge -\frac{RF(R)^2}{1+\beta R}\int_{\partial B_R}\zeta^2 d\sigma- F(R)u_r(R)\int_{\partial B_R} \zeta^2 \, d\sigma\\
           &=\frac{R}{1+\beta R} F(R)\left(\bar f_{B_R}-f(R)\right) \int_{\partial B_R} \zeta^2 \, d\sigma\\
           &=\frac{R}{1+\beta R}\left(f(R)-\frac{n-1-\beta R}{n}\bar f_{B_R}\right)\left(\bar f_{B_R}-f(R)\right)\int_{\partial B_R} \zeta^2 \, d\sigma\\
           &\ge0.
\end{aligned}
  \end{equation*}
Hence we have shown that \eqref{f-condition} is a sufficient condition to guarantee that ball is a stable shape to $T_\beta(\cdot)$ under smooth volume-preserving perturbation.

Next, we show the necessity of \eqref{f-condition}. The idea is to choose $F_t$ to be the translation map with constant speed, and hence $\zeta$ is a linear combination of coordinate functions restricted on $\partial B_R$.  By solving \eqref{v-eq} for such $\zeta$, $v$ is also a linear combination of coordinate functions. Hence $v$ is exactly a second eigenfunction of Steklov eigenvalue, and hence the inequality in \eqref{Fv-lowbd} becomes equality. By the equality condition of Schwarz inequality, the inequality in \eqref{Fv-upbd} also becomes equality. Therefore, previous computation yields
\begin{align*}
      \frac{d^2}{dt^2} \bigg|_{t=0}T_\beta(F_t(B_R))=\frac{R}{1+\beta R}\left(f(R)-\frac{n-1-\beta R}{n}\bar f_{B_R}\right)\left(\bar f_{B_R}-f(R)\right)\int_{\partial B_R} \zeta^2 \, d\sigma.
\end{align*}
In order for the above to be nonnegative, if forces $f$ to satisfy \eqref{f-condition}.
\end{proof}

\section{Instability of $T(\cdot)$ under Lipschitz perturbation: regular polygon example}
In this section, we will prove Theorem \ref{PN}.

\begin{proof}
    Let $u$ be the function where the infimum in \eqref{T-function} is achieved. Let $\Omega$ be a regular polygon centered at the origin, then on the boundary, $x \cdot \nu =\rho$, where $\rho:=\rho_\Omega$ is the inradius of $\Omega$, the radius of the inscribed circle. Hence we have the following explicit formula for $u$:
\begin{align}
    \label{formulaforu}
u(x)=\frac{1}{4P(\Omega)}\int_{\partial \Omega}|x|^2 \, ds-\frac{1}{4}|x|^2.    
\end{align}
Note that 
\begin{align*}
    \int_\Omega \Delta (|x|^4)\, dx=16\int_{\Omega}|x|^2 \, dx.
\end{align*}
Also, by divergence theorem, the left hand side above is also equal to 
\begin{align*}
    \int_{\partial \Omega} 4|x|^2 (x\cdot \nu)\, ds=4\rho \int_{\partial \Omega}|x|^2.
\end{align*}
Hence 
\begin{align}
    \label{zilv1}
\int_\Omega |x|^2 \, dx=\frac{\rho}{4}\int_{\partial \Omega}|x|^2 \, ds.
\end{align}
Note that 
\begin{align}
    T(\Omega)=-\frac{1}{2} E(\Omega), 
\end{align}where 
\begin{align}
    \label{defofE}
E(\Omega):=\int_\Omega u \, dx,
\end{align}hence to prove that $T(P_N)$ is strictly increasing, is equivalent to proving $E(P_N)$ is strictly decreasing. 

From \eqref{formulaforu} and \eqref{zilv1}, and since $\rho P(\Omega)=2|\Omega|=2\pi$, we have that for a regular polygon $\Omega$, 
\begin{align*}
    E(\Omega):=\frac{|\Omega|}{4P(\Omega)}\int_{\partial \Omega}|x|^2 \, ds-\frac{\rho}{16}\int_{\partial \Omega}|x|^2 \, ds=\frac{\rho}{16}\int_{\partial \Omega}|x|^2 \, ds.
\end{align*}

Considering one side $\Gamma$ of $\Omega=P_N$, and let the horizontal axis $\xi$ pass through $\Gamma$, and let the origin be the center of $P_N$. Let $L$ be the length of $\Gamma$, and hence $L=2\rho \tan(\pi/N)$. Then 
\begin{align*}
    \int_\Omega |x|^2 \, ds=N \int_\Gamma |x|^2 \, ds=&N \rho^2 L+N \int_{-L/2}^{L/2} \xi^2 \, d\xi\\
    =& 2N\rho^3 \tan(\pi/N)+\frac{2}{3}N\rho^3\tan^3(\pi/N).
\end{align*}
Hence 
\begin{align}
    E(P_N)=\frac{\rho^4 N \tan (\pi/N)}{24}(3+\tan^2(\pi/N)).
\end{align}
Since $N\rho^2\tan(\pi/N)=|\Omega|=\pi$, we have
\begin{align}
\label{final}
    E(P_N)=\frac{\pi^2 (3+\tan^2(\pi/N))}{24N\tan(\pi/N)}.
\end{align}
Baby calculus implies that as $N \rightarrow \infty$,
\begin{align*}
    E(P_N)=\frac{\pi}{24}\left(3+\frac{2}{3}\frac{1}{N^4}+O(\frac{1}{N^6})\right).
\end{align*}
Hence when $N$ is large, $E(P_N)$ is strictly decreasing, and $T(P_N)$ is strictly increasing, and hence ball is not a local minimizer under Lipschitz variation.

Some further work can directly show that $E(P_N)$ is strictly decreasing when $N\ge 3$ is increasing, and hence $T(P_N)$ is strictly increasing for all $N \ge 3$.
\end{proof}

\section{Discussion between $T_\beta(\Omega)$ and $J_\beta(\Omega)$}\label{TJ}

In this section, we mainly consider the special case that $f\equiv1$ in   energy functionals  $T_\beta(\Omega)$ and $J_\beta(\Omega)$, which are defined by \eqref{T-beta} and \eqref{J-beta}, respectively.
  We know that the Euler-Lagrange  equation in $T_\beta(\Omega)$ is
\begin{equation}\label{EL-Tb-1}
    \begin{cases}
     -\Delta u=1 \qquad &\text{in }\Omega;\\
     \frac{\partial u}{\partial \nu}+\beta u  =c \qquad &\text{on }\partial\Omega;\\
     \int_{\partial \Omega}ud\sigma=0,
  \end{cases}
\end{equation}
which has a unique solution. If the infimum in the definition of $J_\beta(\Omega)$ is attained at $\hat u_\Omega$, then
\begin{equation*}
  u_\Omega:=\hat u_\Omega-\frac1{P(\Omega)} \int_{\partial \Omega}\hat u_\Omega d\sigma
\end{equation*}
automatically solves \eqref{EL-Tb-1}, i.e., $ u_\Omega$ is the function where the infimum in \eqref{T-beta} is attained.  Note that  $\hat u_\Omega$ satisfies \eqref{EL-R}. We deduce from \eqref{c} and \eqref{EL-Tb-1}$_2$ that
\begin{equation*}
  -|\Omega|=cP(\Omega)=\int_{\partial \Omega}  \left( \frac{\partial u_\Omega}{\partial \nu}+\beta u_\Omega\right) d\sigma=-\beta  \int_{\partial \Omega}\hat u_\Omega d\sigma,
\end{equation*}
which implies that
\begin{equation*}
  u_\Omega=\hat u_\Omega-\frac{|\Omega|}{\beta P(\Omega)}.
\end{equation*}
Hence, we have
\begin{equation}\label{TJ+omega}
  T_\beta(\Omega)=-\frac12\int_{  \Omega} \left( \hat u_\Omega-\frac{|\Omega|}{\beta P(\Omega)}\right) dx=  J_\beta(\Omega)+\frac{|\Omega|^2}{2\beta P(\Omega)},
\end{equation}
where we use the equality $J_\beta(\Omega)=-\frac12\int_{  \Omega}   \hat u_\Omega dx$.

In the following,   we  will drop the symbol $\Omega$ of  $u_\Omega$  and $\hat u_\Omega$.  Clearly, the ball $B_R$ is critical to $T_\beta(\cdot)$ under volume preserving flow. Also,
  \begin{equation*}
\begin{aligned}
\frac{d^2}{dt^2} \bigg|_{t=0}T_\beta(F_t(B_R))
&=\frac{d^2}{dt^2} \bigg|_{t=0}J_\beta(F_t(B_R))+\frac{|B_R|^2}{2\beta}\frac{d^2}{dt^2} \bigg|_{t=0}\frac{1}{P(F_t(B_R))} \\
&= \frac\beta2 \hat u^2(R) \int_{\partial B_R} \left(- \Delta_{\partial B_R}\zeta-\frac{n-1}{R^2}\zeta\right)\zeta  d\sigma
+\int_{\partial B_R}\left(\hat v\zeta+\hat u_r\zeta^2\right)
\left( \hat u_{rr}-\beta^2\hat u \right) d\sigma\\
           &\ \ +\frac{|B_R|^2}{2\beta P^2(B_R)}\int_{\partial B_R} \left( \Delta_{\partial B_R}\zeta+\frac{n-1}{R^2}\zeta\right)\zeta  d\sigma\\
  &:=I+II+III,
\end{aligned}
  \end{equation*}
  which follows from \cite{Li-Yang} and the following
  \begin{equation*}
  \begin{aligned}
      \frac{d^2}{dt^2} \bigg|_{t=0}\frac{1}{P(F_t(B_R))}&=\frac{d}{dt} \bigg|_{t=0}\left[-P(F_t(B_R))^{-2} \int_{\partial F_t(B_R)} H(t)\eta\cdot \nu(t)d\sigma_t\right]\\
      &=2P(B_R)^{-3} \left( \int_{\partial   B_R } H\zeta d\sigma\right)^2- P(B_R)^{-2}\int_{\partial B_R} \left( -\Delta_{\partial B_R}\zeta-\frac{n-1}{R^2}\zeta\right)\zeta  d\sigma\\
      &\ \  -P(B_R)^{-2}\int_{\partial B_R} H\zeta \text{div}\eta  d\sigma\\
      &=P(B_R)^{-2}\int_{\partial B_R} \left( \Delta_{\partial B_R}\zeta+\frac{n-1}{R^2}\zeta\right)\zeta  d\sigma 
  \end{aligned}
  \end{equation*}
owing to \eqref{zeta} and \eqref{sigma-dt}--\eqref{H-dt}.
We can check that  
\begin{equation*}
  I+III=0.
\end{equation*}
 Indeed,    by \eqref{EL-R}  and divergence theorem, we have
\begin{equation*}
  \beta \hat u(R)\int_{\partial B_R}d \sigma=\int_{\partial B_R} \beta  \hat ud \sigma
  =-\int_{\partial B_R}  \frac{\partial\hat u}{\partial \nu} d \sigma
  =-\int_{  B_R}  \Delta \hat u  d \sigma=\int_{  B_R}     d \sigma,
\end{equation*}
which evinces that
\begin{equation*}
  \hat u(R)=\frac{R}{n\beta}.
\end{equation*}
So we have
\begin{equation*}
  \frac\beta2 \hat u^2(R)=\frac{R^2}{2\beta n^2}=\frac{|B_R|^2}{2\beta P^2(B_R)}.
\end{equation*}
Hence $I+III=0$. 

Furthermore, by \cite{Li-Yang}, when $f \equiv 1$, $II\ge 0$. Hence 
\begin{equation*}
  \frac{d^2}{dt^2} \bigg|_{t=0}T_\beta(F_t(B_R))\ge 0,
\end{equation*}
This matches Theorem \ref{the:local}.

Since  the first eigenvalue of Laplacian on $\partial B_R$ is $\frac{n-1}{R^2}$, essentially we proved the following.
\begin{corollary}
  Let $J_\beta(\Omega)$ be defined as \eqref{J-beta}, where $f\equiv1$. Define  
  \begin{equation*}
    J^\lambda_\beta(\Omega):=J_\beta(\Omega)+\lambda \frac{|\Omega|^2}{  P(\Omega)}, \quad\mbox{ for }\beta>0.
  \end{equation*}
   Then under smooth volume-preserving perturbation, ball is stable for  $ J_\beta^\lambda(\cdot)$ whenever $\lambda\le \frac1{2\beta}$. When $\lambda=\frac1{2\beta}$, $J_\beta^\lambda(\cdot)=T_\beta(\cdot)$.
\end{corollary}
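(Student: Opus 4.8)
Since the corollary is, as the text says, essentially a repackaging of the computation just carried out, the plan is to isolate exactly which pieces of that computation survive when the coefficient $\tfrac1{2\beta}$ in front of $|\Omega|^2/P(\Omega)$ is replaced by a general $\lambda$. First I would record stationarity: along a volume–preserving flow $|F_t(B_R)|\equiv|B_R|$, and $B_R$ is a critical point of the perimeter under a volume constraint (isoperimetric inequality), so $\frac{d}{dt}\big|_{t=0}\,\lambda|F_t(B_R)|^2/P(F_t(B_R))=0$; combined with the criticality of $J_\beta$ at the ball from \cite{Li-Yang} this gives that $B_R$ is stationary for $J_\beta^\lambda(\cdot)$ for every $\lambda$. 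For the second variation I would start from
\begin{align*}
\frac{d^2}{dt^2}\bigg|_{t=0} J_\beta^\lambda(F_t(B_R))=\frac{d^2}{dt^2}\bigg|_{t=0} J_\beta(F_t(B_R))+\lambda\,|B_R|^2\,\frac{d^2}{dt^2}\bigg|_{t=0}\frac{1}{P(F_t(B_R))},
\end{align*}
and substitute the two expressions already obtained in this section, namely $\frac{d^2}{dt^2}\big|_{t=0}J_\beta(F_t(B_R))=I+II$ with $I=-\frac{\beta}{2}\hat u^2(R)\,Q[\zeta]$ and $II=\int_{\partial B_R}(\hat v\zeta+\hat u_r\zeta^2)(\hat u_{rr}-\beta^2\hat u)\,d\sigma$, together with $\frac{d^2}{dt^2}\big|_{t=0}\frac{1}{P(F_t(B_R))}=P(B_R)^{-2}Q[\zeta]$, where
\begin{align*}
Q[\zeta]:=\int_{\partial B_R}\Big(\Delta_{\partial B_R}\zeta+\frac{n-1}{R^2}\zeta\Big)\zeta\,d\sigma=-\int_{\partial B_R}|\nabla_{\partial B_R}\zeta|^2\,d\sigma+\frac{n-1}{R^2}\int_{\partial B_R}\zeta^2\,d\sigma .
\end{align*}

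Next I would use the identity $\frac{\beta}{2}\hat u^2(R)=\frac{|B_R|^2}{2\beta P(B_R)^2}$ already derived (it comes from $\hat u(R)=R/(n\beta)$ and $|B_R|/P(B_R)=R/n$) to collect the two terms proportional to $Q[\zeta]$, obtaining
\begin{align*}
\frac{d^2}{dt^2}\bigg|_{t=0} J_\beta^\lambda(F_t(B_R))=II+\Big(\lambda-\frac{1}{2\beta}\Big)\frac{|B_R|^2}{P(B_R)^2}\,Q[\zeta].
\end{align*}
The sign check is then immediate: $II\ge0$ by \cite{Li-Yang} (this is the $f\equiv1$ case), while $Q[\zeta]\le0$ because the first–order volume constraint forces $\int_{\partial B_R}\zeta\,d\sigma=0$, so $\zeta$ is $L^2$–orthogonal to the constants and the spectral gap $\int_{\partial B_R}|\nabla_{\partial B_R}\zeta|^2\,d\sigma\ge\frac{n-1}{R^2}\int_{\partial B_R}\zeta^2\,d\sigma$ applies, $\frac{n-1}{R^2}$ being the first nonzero eigenvalue of $-\Delta_{\partial B_R}$. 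Hence for $\lambda\le\frac{1}{2\beta}$ the factor $\lambda-\frac{1}{2\beta}\le0$, so $(\lambda-\frac{1}{2\beta})Q[\zeta]\ge0$ and the second variation is nonnegative, which is stability of the ball for $J_\beta^\lambda(\cdot)$. Finally, the equality $\lambda=\frac{1}{2\beta}$ turns the $Q[\zeta]$–term off and reduces the formula to $II\ge0$; equivalently, $J_\beta^{1/(2\beta)}(\cdot)=T_\beta(\cdot)$ directly from \eqref{TJ+omega}, so this case is precisely the $f\equiv1$ instance of Theorem \ref{the:local}.

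I do not expect a real obstacle here: the entire substantive input, the nonnegativity $II\ge0$ of the "intrinsic" part of the second variation of $J_\beta$ at the ball, has already been imported from \cite{Li-Yang}, and everything else is linear algebra and spectral theory on the sphere. The only point demanding a little care is bookkeeping — making sure the first–order volume constraint $\int_{\partial B_R}\zeta\,d\sigma=0$ is invoked so that the constant mode is excluded from $\zeta$ and $Q[\zeta]\le0$ genuinely holds, and keeping track of the sign of $\lambda-\frac1{2\beta}$ so that the perimeter correction is seen to help rather than hurt in the stated range.
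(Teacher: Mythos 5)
Your proposal is correct and follows essentially the same route as the paper: it reuses the decomposition $I+II+III$ of the second variation from Section \ref{TJ}, the identity $\frac{\beta}{2}\hat u^2(R)=\frac{|B_R|^2}{2\beta P(B_R)^2}$ to combine the two terms proportional to $Q[\zeta]$, the nonnegativity of $II$ from \cite{Li-Yang}, and the spectral gap $\frac{n-1}{R^2}$ on $\partial B_R$ applied to the mean-zero $\zeta$ — which is exactly the observation the paper invokes when stating the corollary. The bookkeeping of the sign of $\lambda-\frac{1}{2\beta}$ and the identification $J_\beta^{1/(2\beta)}=T_\beta$ via \eqref{TJ+omega} are likewise as in the paper.
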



\section{Examples}

In this section, first, we provide a counterexample to instability of Serrin's system for Dirichlet perturbation.

{\bf  Counterexample:} Let $\Omega=\prod_{i=1}^n(-a_i,a_i),\ a_1\le a_2\le\cdots\le a_n$ and if $u$ is a solution to \eqref{EL-epsilon}, then up to a constant,
\begin{equation*}
  u=-\left(\sum_{i=1}^n\frac{x_i^2}{a_i}\right)\left(\sum_{i=1}^n\frac{2}{a_i}\right)^{-1}
  =-\frac{\sigma_n}{2\sigma_{n-1}}\sum_{i=1}^n\frac{x_i^2}{a_i},
\end{equation*}
where $\sigma_k$ is the $k$-th elementary symmetric polynomial in $[a_1, a_2,\cdots, a_n]$. Let $a_1=\varepsilon^{n-1}$ and $a_i=\frac1\varepsilon$ for $i\ge 2$. Straightforward calculation implies that
\begin{equation*}
   \underset{\partial\Omega}{\text{osc}} u\le  \underset{\overline\Omega}{\text{osc}} u=\frac{\sigma_1\sigma_n}{2\sigma_{n-1}}\begin{cases}
     =\frac{1}{2}|\Omega| \qquad &\text{ for }n=2,\\
     \rightarrow 0,  \text{ as } \varepsilon\rightarrow 0   &\text{ for }n\ge 3,
   \end{cases}
\end{equation*}
since  when $n\ge 3$, 
\begin{equation*}
  \sigma_{1} \sim \frac{n-1}{\varepsilon},\quad \sigma_{n-1}\sim \frac{1}{\varepsilon^{n-1}},
   \quad \text{  and so  }\frac{\sigma_1 }{2\sigma_{n-1}}\sim \varepsilon^{n-2}\rightarrow0,  \text{ as } \varepsilon\rightarrow 0.
\end{equation*}
 Note that when $n=2$,
 \begin{equation*}
   \underset{\partial\Omega}{\text{osc}} u=u(a_1,0)-u(a_1,a_2)= \frac{a_1a_2^2}{2(a_1+a_2)}=\frac{|\Omega|}{8}\frac{a_2}{a_1+a_2}\ge \frac{|\Omega|}{16}.
 \end{equation*}
It leaves open that whether the answer to  Question 1 is positive for $n=2$.

\vskip 0.3cm

Next, we consider whether some annulus in $\R^n$ can be a stationary shape to  $T(\cdot)$ introduced by \eqref{T-function}.

\textbf{Example: Annulus }

\vskip 2mm
\par

Without loss of generality,  we define the annulus  $\Omega=\{x\in\mathbb R^n:1<|x|<b\}$, then the solution $u$ of \eqref{station-eq} with  $\beta=0$ and $f\equiv1$ satisfies
\begin{equation}\label{sol-annulus}
  u(x)=\begin{cases}
   a_1|x|^2+a_2 |x|^{2-n}+a_3, \qquad &n\ge 3;\\
   a_1|x|^2+a_2\log|x|+a_3,  &n=2.
  \end{cases}
\end{equation}
 From \eqref{station-eq}$_1$  we see that
 \begin{equation*}
   \frac1{r^{n-1}}(r^{n-1}u_{r})_r=-1,
 \end{equation*}
 which implies that
 \begin{equation*}
\begin{cases}
     \frac1{r^{n-1}}\left(2a_1r^n-(n-2)a_2\right)_r=-1, \qquad &n\ge 3\\
     \frac1{r^{n-1}}\left(2a_1r^n+a_2\right)_r=-1,   &n=2\\
\end{cases}
\Rightarrow a_1=-\frac1{2n}.
\end{equation*}
It follows from \eqref{station-eq}$_2$ that
\begin{equation}\label{u-r}
   u_r(b)=-u_r(1)=-\frac{|\Omega|}{P(\Omega)},
\end{equation}
By \eqref{station-eq}$_3$, we find
\begin{equation*}
 \frac12 u_r(b)^2+\left(\frac{|\Omega|}{P(\Omega)}H(b)-1\right)u(b)= \frac12 u_r(1)^2+\left(\frac{|\Omega|}{P(\Omega)}H(1)-1\right)u(1),
\end{equation*}
which together with   \eqref{u-r}  entails that
\begin{equation}\label{u-station-con}
   \frac{u(b)}{u(1)}=-\frac{b\left[(n-1)(b^n-1)+n(b^{n-1}+1)\right]}{(n-1)(b^n-1)-nb(b^{n-1}+1)}
   =\frac{b\left[(n-1)b^n+nb^{n-1}+1\right]}{b^n+nb+n-1}.
\end{equation}

 On the other hand,  the condition that $\int_{\partial \Omega}ud\sigma=0$  gives that
\begin{equation}\label{u-bdy}
 \frac{u(b)}{u(1)}=-\frac{P(B_1(0))}{P(B_b(0))}=-\frac{1}{b^{n-1}}.
\end{equation}
Thus, we  derive  from \eqref{u-station-con} and   \eqref{u-bdy} that
\begin{equation*}
  - \frac{1}{b^{n}}=\frac{ (n-1)b^n+nb^{n-1}+1}{b^n+nb+n-1}.
\end{equation*}
This is impossible since the right hand side of the above equation is positive for $n\ge 2$ and $b>1$.  Therefore, the annulus  in $\R^n$   {\bf can be not a stationary shape} to $T(\cdot)$.
\vskip 3mm

Next, we will compare the energy   $T(\cdot)$ between  the ball   and the annulus  in $\R^2$ owing the same volume.

Let
\begin{equation*}
  \Omega_1=\{x\in\mathbb R^2:1<|x|<b\} \quad \mbox{ and }\quad  \Omega_2=\{x\in\mathbb R^2:|x|<\sqrt{b^2-1}\}
\end{equation*}
 and $u_{i}$ solves \eqref{EL-N} for $\Omega_i,\ i=1,2$. Then a direct calculation shows that
 \begin{equation}\label{sol-1}
   u_1(x)=-\frac{1}{4}|x|^2+a_2\log |x|+a_3
 \end{equation}
 and
 \begin{equation}\label{sol-2}
   u_2(x)=-\frac{|x|^2-R^2}{4},\quad R=\sqrt{b^2-1}.
 \end{equation}
From  \eqref{u-r} and \eqref{u-bdy} we see that
\begin{equation*}
  a_2=\frac b2,\quad a_3=\frac1{b+1}\left(\frac{b^3+1}4-\frac{b^2}2\log b\right).
\end{equation*}
 It is easy to check that  $T(\Omega_i)=-\frac12 \int_{\Omega_i}u_i\, dx$.

 For the ball, we calculate
 \begin{equation}\label{T-ball}
   \int_{\Omega_2}u_2(x)dx=\frac\pi2 \int_0^Rr(R^2-r^2)dr=\frac{\pi R^4}8=\frac{\pi}{8}(b^2-1)^2.
 \end{equation}
Then for the annulus $\Omega_1$, a direct computation shows
\begin{equation}\label{T-annu}
\begin{aligned}
   \int_{\Omega_1}u_1\, dx&= 2\pi \int_1^b\left[-\frac{r^2}{4}+\frac b2 \log r+\frac1{b+1}\left(\frac{b^3+1}4-\frac{b^2}2\log b\right)\right]r\, dr\\
   &=-\frac{\pi(b^4-1)}{8}+\frac{\pi b}{4}\left(2b^2\log b-b^2+1\right)+(b-1)\pi\left(\frac{b^3+1}4-\frac{b^2}2\log b\right).
\end{aligned}
\end{equation}
Hence, we have
\begin{equation*}
\begin{aligned}
\int_{\Omega_2}u_2\, dx-\int_{\Omega_1}u_1\, dx
     =\frac\pi 4\left(2b^3-b^2-2b^2\log b-2b+1\right).
\end{aligned}
\end{equation*}
Let $h(b)=2b^3-b^2-2b^2\log b-2b+1$, then we know that
\begin{equation*}
  h'(b)>0 \quad \mbox{for }b> 1,
\end{equation*}
which implies $ h(b)>h(1)=0$. So one has
\begin{equation*}
  \int_{\Omega_2}u_2\, dx>\int_{\Omega_1}u_1\, dx\Rightarrow T(\Omega_1)>T(\Omega_2).
\end{equation*}

\end{document}